\definecolor{Red}{rgb}{1,0,0}
\definecolor{Blue}{rgb}{0,0,1}
\definecolor{Olive}{rgb}{0.41,0.55,0.13}
\definecolor{Yarok}{rgb}{0,0.5,0}
\definecolor{Green}{rgb}{0,1,0}
\definecolor{MGreen}{rgb}{0,0.8,0}
\definecolor{DGreen}{rgb}{0,0.55,0}
\definecolor{Yellow}{rgb}{1,1,0}
\definecolor{Cyan}{rgb}{0,1,1}
\definecolor{Magenta}{rgb}{1,0,1}
\definecolor{Orange}{rgb}{1,.5,0}
\definecolor{Violet}{rgb}{.5,0,.5}
\definecolor{Purple}{rgb}{.75,0,.25}
\definecolor{Brown}{rgb}{.75,.5,.25}
\definecolor{Grey}{rgb}{.5,.5,.5}
\newcommand{\R}{\mathbb{R}}
\newcommand{\Z}{\mathbb{Z}}
\newcommand{\C}{\mathbb{C}}
\newcommand{\ignore}[1]{\relax}
\newtheorem{theorem}{Theorem}[section]
\newtheorem{lemma}[theorem]{Lemma}
\newtheorem{coro}[theorem]{Corollary}
\definecolor{Red}{rgb}{1,0,0}
\definecolor{Blue}{rgb}{0,0,1}
\definecolor{Olive}{rgb}{0.41,0.55,0.13}
\definecolor{Green}{rgb}{0,1,0}
\definecolor{MGreen}{rgb}{0,0.8,0}
\definecolor{DGreen}{rgb}{0,0.55,0}
\definecolor{Yellow}{rgb}{1,1,0}
\definecolor{Cyan}{rgb}{0,1,1}
\definecolor{Magenta}{rgb}{1,0,1}
\definecolor{Orange}{rgb}{1,.5,0}
\definecolor{Violet}{rgb}{.5,0,.5}
\definecolor{Purple}{rgb}{.75,0,.25}
\definecolor{Brown}{rgb}{.75,.5,.25}
\definecolor{Grey}{rgb}{.5,.5,.5}
\definecolor{Pink}{rgb}{1,0,1}
\definecolor{DBrown}{rgb}{.5,.34,.16}
\definecolor{Black}{rgb}{0,0,0}
\author{
{\sf David Gamarnik
\thanks{Operations research Center and Sloan School of Management, MIT. Email: gamarnik@mit.edu} 
\thanks{Support from the NSF grant DMS-2015517 is gratefully acknowledged.}
}
}
\begin{document}

\title{Correlation Decay and the Absence of Zeros Property of  Partition Functions}
\date{\today}

\maketitle

\begin{abstract}
Absence of (complex) zeros property is at the heart of the  interpolation method 
developed by Barvinok~\cite{barvinok2017combinatorics} for designing
deterministic approximation algorithms for various graph counting and computing partition functions problems. Earlier methods for solving the same problem include
the one based on the correlation decay property. Remarkably, the classes of graphs for which the two methods apply sometimes coincide 
or nearly coincide. In this paper we show that this is more than just a coincidence. We establish that if the  
interpolation method is valid for a family of graphs satisfying the self-reducibility property, 
then this family exhibits a form of correlation decay property which is asymptotic
Strong Spatial Mixing (SSM) at distances $\omega(\log n)$, where $n$ is the number of nodes of the graph. This applies in particular
to amenable graphs, such as graphs which are finite subsets of lattices. 

Our proof is based on a certain graph polynomial representation of  the associated partition function. This representation is
at  the heart of the design of the polynomial time algorithms underlying the
interpolation method itself. We conjecture that our result holds for all, and not just amenable graphs.
\end{abstract}


\section{Introduction}\label{section:Introduction}
The algorithmic question at the heart of this paper is one of designing a polynomial time algorithm for solving
various graph counting problems such as counting the number of independent sets of a graph, the number of proper colorings of a graph,
the number of partial matchings, etc. Generically, the problem is one of 
computing the partition function
$Z(G)$ associated with a graph induced possibly with some additional parameters such as the number of colors, list-colors, etc. 
As the existence of a polynomial
time algorithm for computing  partition functions amounts to the algorithmic complexity statement $P=\#P$, widely believed not to be true, 
the research has focused primarily on the question of designing algorithm for computing  partition functions 
approximately~\cite{barvinok2017combinatorics},\cite{JerrumSinclairHochbaumApproxAlgorithms},\cite{jerrum2003counting}.
The gold standard for the approximation algorithms is the existence of a Fully Polynomial Time Approximation Scheme (FPTAS). 
Randomized FPTAS (typically abbreviated as FPRAS) based on the Markov Chain Monte Carlo method
have been known for a while for a variety of such problems~\cite{JerrumSinclairHochbaumApproxAlgorithms},\cite{jerrum2003counting}. 

General methods for deterministic approximation algorithms have been developed much later. The algorithmic method based on the correlation decay 
property was introduced first in Bandyopadhyay and Gamarnik~\cite{BandyopadhyayGamarnikCountingConference},\cite{BandyopadhyayGamarnikCounting}. 
The method did not amount to the FPTAS as it was only leading
to approximation of the logarithm of the associated partition function, and only for graphs with diverging girth. A version of 
the correlation decay method which led to the FTPAS was invented in a breakthrough work of  Weitz~\cite{weitzCounting}
for the problem of counting the number of independent sets of a graph. A number of subsequent works extended the method to other graph 
counting problems~\cite{BayatiGamarnikKatzNairTetali},\cite{GamarnikKatz},\cite{li2013correlation},\cite{lu2013improved}.

The most recent 
progress towards constructing deterministic FPTAS for graph counting problems  is the development by 
Barvinok~\cite{barvinok2017combinatorics},\cite{barvinok2016computing},\cite{barvinok2015computing},\cite{barvinok2017computing},\cite{barvinok2019approximating} 
of an 
algorithmic method based on the Taylor approximation of the associated complex valued interpolated
partition function. Specifically, one designs a family of partition functions $Z(G(z))$ parametrized by complex value $z$ such that
when $z=1$, $Z(G(z))=Z(G)$ and when $z=0$ the associated partition function $Z(G(0))$ is trivially computable. One then considers
the Taylor approximation of the log-partition function $\log Z(G(z))$ and computes its first $m$-terms for $m$ which is typically logarithmic
in the number of nodes. This can be done by the brute force method in quasi-polynomial time $n^{O(\log n)}$, where $n$
is the number of nodes in a graph, but also in just polynomial time  $n^{O(1)}$ in bounded degree graphs, using
a certain graph polynomial representation of the partition function. This representation was 
 developed by Patel and Regts~\cite{patel2017deterministic}, and it  is at the core of the approach
of the present paper. Barvinok's interpolation  
method provably works provided that the model exhibits the ''zero-freeness'' property, namely the set of zeros of the interpolated
function $Z(G(z))$ is outside a connected region containing $0$ and $1$. 
Several families of graphs were the method is effective  either coincide or nearly coincide with the families of graph for which the correlation
decay based method applies. For other families of graphs no correlation decay counterparts are known or those which are known appear to work in a more
restricted setting. Examples of the former include the problem of counting partial matchings, 
see~\cite{barvinok2017combinatorics} and \cite{BayatiGamarnikKatzNairTetali},  where both methods apply to all bounded degree graphs, and
the problem of counting independent sets when the associated fugacity parameter satisfies
\begin{align}\label{eq:lambda_c}
\lambda< (d-1)^{d-1}/(d-2)^d,
\end{align} 
where $d$ is the largest degree of a graph.
The correlation decay based method in this regime was developed by 
Weitz in~\cite{weitzCounting}, 
and the interpolation method up to the  same threshold was developed by Peters and Regts~\cite{peters2019conjecture},
by establishing zero-freeness in this regime. 
Earlier slightly weaker bound for the zero-freeness property was established by 
Harvey,  Srivastava and Vondr{\'a}k~\cite{harvey2018computing}. 
At the same time for the problem of counting list-coloring of a graph, the correlation decay based
method was only developed for graphs with  list sizes  at least $2.58d+1$, as shown 
in Lu and Yin~\cite{lu2013improved},
whereas the polynomial interpolation method
applies under a significantly weaker assumption, where list sizes are at least roughly $1.764d$, as shown in
Liu, Sinclair and Srivastava~\cite{liu2019deterministic}. It is known that the correlation decay in the form
of the Strong Spatial Mixing (see below) does apply in this regime as well~\cite{gamarnik2015strong}, but turning
it into a counting algorithm is only known through the interpolation method, as was done in~\cite{liu2019deterministic}. 

What is the ultimate power and  limit of the interpolation method, and how are those related to the correlation decay property? 
We give a one-sided answer to this question by showing that the validity of the interpolation method for a self-reducible class
of graphs implies a form of correlation 
decay property we call the asymptotic Strong Spatial Mixing  (SSM). 
This is our main result stated as Corollary~\ref{coro:Main-corollary},
which is a simple implication of our main technical result stated in Theorem~\ref{theorem:main-result}. 
The self-reducibility refers to the property that the model remains in the family when some of the nodes have prescribed values.
We give several examples of such models, with independent set model on bounded degree graphs being one example, and list-coloring of a graph
problem being the second example. Our result is thus
stated for two types of interpolation schemes successfully used in the past, which we call Type I and Type II interpolations. The first
is one was used to design an FPTAS  for counting independent sets as in~\cite{patel2017deterministic}, and the second is a generalization of the type 
used for designing FPTAS for counting list-colorings of a graph, as in~\cite{liu2019deterministic}. Both interpolation types 
are defined precisely in the body of the paper.

We now discuss briefly the SSM  property. Its weaker counterpart, the Spatial Mixing (SM)
property, is a property which is stated in terms of the Gibbs distribution associated with the partition function.
The SM is widely studied in the statistical physics literature~\cite{GeorgyGibbsMeasure} and is directly related to the properties
of uniqueness of Gibbs measures on infinite graphs.  Roughly speaking, it is the property that
the marginal distribution with respect to the Gibbs measure associated with a subset of nodes of a graph is asymptotically 
independent from the conditioning of the boundary of a neighborhood of the set, when the radius of the neighborhood is sufficiently large.
Typically, such a decay of correlations is upper bounded by a function converging to zero as radius diverges to infinity, and this
function is uniform in the choice of the set and graph size itself.
The SSM is a strengthened version of the SM which is SM applied to the original graph being reduced by setting some subset of the nodes
of the graph to some fixed values, similarly to the self-reducibility property. 
The asymptotic version of the SSM property that we consider in this paper is a ''non-uniform'' version of the SSM
which occurs at radius values that depend on the graph choice. Specifically, we establish that the zero-freeness property implies
the SSM at radius value $\omega(\log n)$ where
$n$ is the cardinality of the node set. As such the property is applicable in particular to graphs, for which  for any fixed node the number of nodes
with distance  $\omega(\log n)$ from this node still constitutes the bulk of the graph. The special case includes
all subgraphs of lattices, and  amenable type graphs in general. However, it does not apply to graph sequences which are expanders,
and specifically the graphs where nodes beyond distance $\omega(\log n)$ from any given node simply might not exist. Whether our result
extends to the case of expanders is thus left  open.  
We note
that the SSM by itself does not render the partition function estimation algorithms and additional steps are needed such
as either the SSM on the associated self-avoiding tree as in~\cite{weitzCounting}, or the SSM on the associated computation tree
as in~\cite{GamarnikKatz}.

One could wonder whether the opposite implication is true as well.  Also one could wonder whether our result is implied
by some existing results in the non-algorithmic literature. 
For restricted models such as lattices, indeed the equivalence between the zero-freeness and long-range independence has been known
for a while as discussed in the classical works of Dobrushin and Shlossman~\cite{dobrushin1987completely}. Remarkably, however, such
an equivalence  does not
extend to unstructured graph sequences, such as for example sequences of all bounded degree graphs, and in fact the lack of zero-freeness can coexist
with long-range independence, as we now demonstrate. Indeed, consider any model which violates the 
zero-freeness property, for example the hard-core model which violates
condition (\ref{eq:lambda_c}) above. For this choice of $\lambda$ and $d$ consider any constant size graph with degree $d$ (for example a clique
on $d+1$ nodes) and a disjoint union of $n/(d+1)$ of such graphs. The set of zeros of the associated partition function is the set of zeros
of one individual clique and thus violates the zero-freeness property. Yet the model trivially exhibits the long-range independence for distances beyond $d$
including the SSM property.

Our result does not rule out the applicability of the interpolation method beyond the SSM regime if some modifications are introduced. For example,
Helmuth, Perkins and Regts~\cite{helmuth2020algorithmic} and Jenssen,  Keevash  and Perkins~\cite{jenssen2020algorithms} 
apply the method to low-temperature models on lattices and bi-partite graphs in general by taking the advantage of the simple
structure of ground states on these models and appropriate redefining of the underlying partition function. 

The fact the long-range dependence might indicate a barrier for a successful implementation of the interpolation argument should not be entirely surprising in light
of some of the hardness results implied by the long-range dependence. In particular, Sly~\cite{sly2010computational} 
has shown that for general graphs with degree at most $d$
no FPTAS exists for values $\lambda$ strictly violating the condition (\ref{eq:lambda_c}), unless $P=NP$. The argument leverages the fact that
bi-partite sparse random graphs exhibit a long-range dependence which can be then used as a gadget in a more complicated graph structure to argue
that the existence of an FPTAS for computing the partition function of this graph structure
implies an approximation algorithm for the MAX-CUT problem, which is known not to admit an approximation algorithm unless P=NP. 
The Sly's result by itself though does not imply our result, as our result  is not based on  any complexity-theoretic assumptions.

The proof of our result draws heavily on the work of Patel and Regts~\cite{patel2017deterministic}. It was shown in this paper that the 
interpolated partition function $Z(G(z))$ for many models can be written as the so-called graph polynomial, namely, a polynomial 
with coefficients expressed in terms of linear combination of subgraph counts. It is then shown that the coefficients of the Taylor expansion
of $\log Z(G(z))$ can be expressed entirely in terms of counts of \emph{connected} subgraphs. This was used crucially to ensure the  polynomiality 
as opposed to just quasi-polynomiality of the running  time of the algorithms. The key fact used in this approach
was the fact that  counting the number of connected graphs of order $O(\log n)$ nodes on bounded
degree graphs can be done in time $n^{O(1)}$ as opposed to quasi-polynomial time $n^{O(\log n)}$. For us, though, this property has a completely
different ramification. The conditional marginal distribution of any set $S$, when  conditioning  on the boundary $\partial B(S,R)$
 of an $R$-radius neighborhood $B(S,R)$ of $S$
can be written as a ratio of partition functions of the original and reduced models, using the self-reducibility of the class of models we consider. 
The success of the interpolation argument with up to $O(\log n)$ terms of the Taylor approximation implies that the Taylor approximation 
of this ratio involves
only connected graphs of order $O(\log n)$ which ''touch'' either the set $S$, or the boundary $\partial B(S,R)$ or both. But if $R$ is $\omega(\log n)$, 
no connected graph of size at most  $O(\log n)$ can touch both $S$ and $\partial B(S,R)$. This implies that the Taylor 
approximations of the conditional
marginal distribution, which we call the conditional ''pseudo-marginal'', have the same value as the unconditional pseudo-marginal values, thus implying
the long-range independence at distance $\omega(\log n)$. Our argument as implemented in the current version does not seem to be capable of showing
the long-range independence at distances  $c\log n$ for small constants $c$, which would be needed to extend 
our result to a larger families of graphs, including expanders.

The remainder of the paper is structured as follows. The model definition and the review of the interpolation method are subject of the next
section. In the same section we overview some examples and introduce the definition of pseudo-marginal distributions. 
The definition of the SSM and the asymptotic SSM, and the statements of the main results are in 
Section~\ref{section:main-result}. Some preliminary technical results are in Section~\ref{section:preliminary-results}.
The proof of the main result is found in Section~\ref{section:main-result-proof}.

We close this section with some notational convention. For every integer $K$, $[K]$ denotes the set $1,\ldots,K$. This will
be typically used as the set of colors in this paper.
For every graph $H$, we write $V(H)$ and $E(H)$ for the set of nodes
and the set of edges of $H$, respectively. Two graphs $H_1$ and $H_2$ are disjoint if $V(H_1)\cap V(H_2)=\emptyset$.
By default this means that there are no edges with one end in $V(H_1)$ and another end in $V(H_2)$.
Given a graph
$G=(V,E)$ and node $u\in V$, $B(u)$ denotes the set of neighbors 
of $u$, that is the set of nodes $v\in V$ such that $(u,v)\in E$. For each integer $R$, $B(u,R)$ denotes the set of nodes
$v$ accessible from $u$ via paths of length at most $R$. In particular $B(u,1)=B(u)$. Let $\partial B(u,R)=B(u,R)\setminus B(u,R-1)$
be the set of ''boundary nodes'' -- nodes at distance precisely $R$ from $u$.
 The distance $d(u,v)$ between nodes $u$ and $v$
is the length of a shortest path connecting $u$ to $v$. Namely $d(u,v)=\min t$ such that there exists nodes $u_0=u,u_1,\ldots,u_t=v$
such that each pair $(u_i,u_{i+1}), 0\le i\le t-1$ is an edge. Similarly, for any set $S\subset V$, $B(S,R)=\cup_{u\in S} B(u,R)$
and $\partial B(S,R)=B(S,R)\setminus B(S,R-1)$.
The degree of the graph is $\max_u |B(u)|$. A graph $H$ is connected if $\cup_{R\ge 1}B(u,R)=V(H)$ for each node $u\in V(H)$.
A graph is disconnected if it is not connected.

\section{Graph homomorphisms and the  interpolation method}
Suppose $G=(V,E)$ is a simple undirected graph on the node set $V=V(G)$ and the edge set $E=E(G)$. Given a positive integer $K$, suppose
a vector $a^u\in \R_+^K$
with non-negative entries is associated with every node $u\in V$ of $G$, 
and a symmetric matrix $A^{(u,v)}\in \R_+^{K\times K}$ also with non-negative entries is associated with every edge $(u,v)\in E$ of $G$. 
Let $\mathcal{A}$ be short-hand notation for the collection $a^u, u\in V,  A^{(u,v)}, (u,v)\in E$. 
We will often refer to the elements of $[K]$
as colors and call the collection $\mathcal{A}$  list-coloring
of $G$ for reasons to be discussed below. 
Define
\begin{align}\label{eq:partition-function}
Z(G,\mathcal{A})\triangleq \sum_{\phi:V\to [K]}\prod_{u\in V}a_{\phi(u)}^u\prod_{(u,v)\in E}A_{\phi(u),\phi(v)}^{(u,v)}.
\end{align}
For any $\phi:V\to [K]$ letting
\begin{align}\label{eq:weight-w}
w(\phi)=\prod_{u\in V}a_{\phi(u)}^u\prod_{(u,v)\in E}A_{\phi(u),\phi(v)}^{(u,v)},
\end{align}
we have $Z(G,\mathcal{A})=\sum_{\phi:V\to [K]} w(\phi)$. 
We call this value the ''number'' of homomorphisms from $G$ to the collection $\mathcal{A}$. The justification for this definition is the special case when 
$a^u$ is the vector of ones for all $u$ and $A^{(u,v)}=A$ are edge independent with 
$A_{i,j}\in \{0,1\}$ for all $1\le i,j\le K$. 
In this case we can think of $A$ as an adjacency matrix of a 
 graph $H$
on $K$ nodes. This graph $H$ is allowed to have loops if some of $A_{i,i}$ equal to one.
Then $Z(G,\mathcal{A})$ counts the number of homomorphisms from $G$ into $H$, namely the number of maps $\phi:V\to V(H)$ such that for every $(u,v)\in E$ it 
is the case that also $(\phi(u),\phi(v))\in E(H)$. 

Throughout the paper we will be considering graphs $G$ associated with some  list-coloring $\mathcal{A}$, so we will use
a shorthand notation $G$ for a graph along with list-coloring. Thus $G$ is a triplet $(V,E,\mathcal{A})$ and we call $G$ a decorated graph. 
We use $Z(G)$ in place
of $Z(G,\mathcal{A})$ light of this notational change.

$Z(G)$ is also
called the partition function, a term more commonly used in the statistical physics literature. The partition functions naturally factorize over 
disjoint unions graphs. Namely, suppose $G_1=(V_j,E_j,\mathcal{A}_j)$ are two disjoint graphs. 
Let $G$ be the union of $G_1$ and $G_2$ with naturally associated union $\mathcal{A}$ of color-lists  $\mathcal{A}_1$ and $\mathcal{A}_2$. 
Then 
\begin{align}\label{eq:partition-factorizes}
Z(G)=Z(G_1)Z(G_2).
\end{align}
Let $\mathcal{G}$ denote the set of all decorated graphs $(V,E,\mathcal{A})$. 
The set  $\mathcal{G}$ is uncountable. Yet we will use the notation of the form $\sum_{H\in \mathcal{G}}\cdot$,
which will be well defined when only finitely many terms to be summed are non-zero.
For every positive integer $i$ let $\mathcal{G}_i\subset\mathcal{G}$ denote the (uncountable) set of all $i$-node decorated graphs $G=(V,E,\mathcal{A})$. Namely
$|V|=i$ for each such graph.  Let $\bar{\mathcal{G}}_i=\cup_{j\le i}\mathcal{G}_j$.
Denote by $\mathcal{G}_{i,\text{conn}}$ the subset of $\mathcal{G}_i$ consisting of only connected graphs. Let
$\bar{\mathcal{G}}_{i,\text{conn}}=\cup_{j\le i}\mathcal{G}_{i,\text{conn}}$.

Similarly, let $\mathcal{G}_{i,\text{edge}}$ be the uncountable set of all graphs which ares spanned by  $i$-edges ($|E|=i$). 
Namely, $(V,E,\mathcal{A})\in \mathcal{G}_{i,\text{edge}}$ if there exists a subset of edges $E'\subset E, |E'|=i$ such 
that the set of nodes incident to edges in $E'$ is the entire set $V$. We note that the same graph may belong to sets
$\mathcal{G}_{i,\text{edge}}$ with different values of $i$ as clearly  subsets of edges of different cardinality can span the same set of nodes.
The sets $\bar{\mathcal{G}}_{i,\text{edge}}, \mathcal{G}_{i,\text{edge},\text{conn}}$ and $\bar{\mathcal{G}}_{i,\text{edge},\text{conn}}$
are defined similarly.

Given a graph $G=(V,E,\mathcal{A})$, we now introduce the associated Gibbs measure $\mu$ on the set of mappings $\phi:V\to [K]$.
The measure is defined as follows: the probability
weight $\mu(\phi)$ associated with $\phi$ is $\mu(\phi)=w(\phi)/Z(G)\ge 0$. The measure is well defined only when 
$Z(G)$ is strictly positive. 
Clearly $\sum_\phi \mu(\phi)=1$, that is $\mu$ is indeed a probability measure.

Associated with Gibbs measure $\mu$ are marginal probability distributions for each subset of nodes $S\subset V$. 
Specifically, for any $S\subset V$ and any $\sigma\in [K]^S$ encoding a coloring assignment $\sigma:S\to [K]$, 
the associated marginal probability denoted by $\mu(G,S,\sigma)$ is
\begin{align}\label{eq:marginal}
\mu(G,S,\sigma)=Z^{-1}(G)\sum_{\phi: \phi(u)=\sigma(u), \forall u\in S}w(\phi).
\end{align}
Namely $\mu(G,S,\sigma)$ is simply the likelihood that  $\phi$ generated at random according to $\mu$, maps each $u\in S$ into $\sigma(u)$. 
Naturally, by the total probability law $\sum_{\sigma:S\to [K]} \mu(S,\sigma)=1$. 

Given two sets $S,T\subset V$ and colorings $\sigma:S\to [K],\tau:T\to [K]$ we will also write $\mu(G,S,\sigma|T,\tau)$ for the conditional
probability of the event $\phi(u)=\sigma(u), \forall u\in S$ when conditioned on the event $\phi(v)=\tau(v), \forall v\in T$. Thus
\begin{align*}
\mu(G,S,\sigma|T,\tau)={\mu(G,S\cup T,\sigma\cup\tau)\over \mu(G,T,\tau)},
\end{align*}
where $\sigma\cup\tau$ denotes the implied coloring of the union $S\cup T$. This is non-zero  only when $\sigma$ and $\tau$ are
consistent on the intersection $S\cap T$.

Next we observe that marginals $\mu(G,S,\sigma)$ can be conveniently written in terms of ratio of partition functions associated
with the original and the reduced  model, exhibiting the fundamental 
property of self-reducibility of our graph homomorphism model. 
Specifically, given $S\subset [V]$ and $\sigma:S\to [K]$,
let $\mathcal{A}_{S,\sigma}$ be the modified decoration of $G$ defined by the same values associated with node $a^{S,\sigma,u}=a^u, u\in V$ and
\begin{align}\label{eq:ASsigma}
A^{S,\sigma;(u,v)}_{i,j}=A^{(u,v)}_{i,j}\textbf{1}(i=\sigma(u)),
\end{align}
for every $(u,v)$ such that $u\in S$, and $A^{S,\sigma;(u,v)}_{i,j}=A^{(u,v)}_{i,j}$ if both $u,v\in V\setminus S$.
By symmetry this also means 
\begin{align*}
A^{S,\sigma;(u,v)}_{i,j}=A^{(u,v)}_{i,j}\textbf{1}(j=\sigma(v)),
\end{align*}
for every $(u,v)$ such that $v\in S$. In particular, the weight  of $\phi:V\to [K]$ according to the modified list is zero if
$\phi(u)\ne \sigma(u)$ for at least one $u\in S$, and it is $w(\phi)$ otherwise. Considering the partition function $Z(G_{S,\sigma})$
of the modified decorated graph $G_{S,\sigma}\triangleq (V,E, \mathcal{A}_{S,\sigma})$,
we obtain  the identity
\begin{align}\label{eq:marginal-ratio-part-f}
\mu(G,S,\sigma)={Z(G_{S,\sigma})\over Z(G)}.
\end{align}
Similarly, for any $S,\sigma:S\to [K],T,\tau:T\to [K]$,
\begin{align}\label{eq:marginal-ratio-part-f-conditional}
\mu(G,S,\sigma|T,\tau)={Z(G_{S\cup T,\sigma\cup\tau})\over Z(G_{T,\tau})},
\end{align}
with term $Z(G)$ cancelled out. We thus note that by definition $\mu(G,S,\sigma|T,\tau)=\mu(G_{T,\tau},S,\sigma)$.

While it would be arguably more natural to modify the decoration $\mathcal{A}$ by modifying node values to 
$a^{S,\sigma;u}_i=a^u_i\textbf{1}(i=\sigma(u))$,
the choice above is dictated by the  interpolation construction to be introduced below associated with the list-coloring problem.

We now discuss some common examples of the model above.

\subsection{Examples}\label{subsection:examples}

\subsubsection*{Independent Sets/Hard-core model}
An independent set of a graph $G$ is a subset $I\subset V$ of nodes which spans no edges. Namely $(u,v)\notin E$ for all $u,v\in I$. 
Fix a parameter $\lambda>0$, which is sometimes called fugacity in the statistical physics literature. 
The counting object of interest is $Z(G)\triangleq \sum_I \lambda^{|I|}$, 
where the sum is over all independent sets of $G$.
When $\lambda=1$, this is simply the total number of independent sets of the graph $G$. Letting $i_k(G)$ stand for the number of independent
sets of $G$ with cardinality $k$ and interpreting $i_0(G)$ as $1$, we also have
\begin{align*}
Z(G)=\sum_{0\le k\le |V|}i_k(G)\lambda^k.
\end{align*}

The model above is a special case of homomorphism counting given by $K=2$, and $\mathcal{A}$ given by $a^u=(1,\lambda)$ for all $u\in V$, 
$A_{2,2}^{(u,v)}=0$ and  $A_{i,j}^{(u,v)}=1$ for all other $1\le i,j\le 2$, for all edges $(u,v)\in E$.
Indeed, for any $\phi:V\to \{1,2\}$ such that $w(\phi)>0$, the set $I=\{u:\phi(u)=2\}$ is an independent set, since otherwise
having $(u,v)\in E$ for some $u,v\in I$ implies $A_{\phi(u),\phi(v)}=A_{2,2}=0$, namely $w(\phi)=0$. Also for every independent set $I$ and the associated
map $\phi(u)=2, u\in I, \phi(u)=1, u\notin I$, we have $w(\phi)=\lambda^{|I|}$. Thus indeed this model is a special case of the model 
(\ref{eq:partition-function}).

We note that the restrictions of the form $G\to G_{S,\sigma}$   does not change the model it in any meaningful way. Specifically, 
consider the reduced graph $\tilde G$ obtained by deleting from $G$ 
all nodes $u\in S$ 
such that $\sigma(u)=1$, and deleting all nodes $u$ and the associated 
neighborhoods $B(u)$, for nodes $u\in S$ such that $\sigma(u)=2$. In other words, $\tilde G$ is obtained by deleting all nodes which are forced not to belong
to an independent set by $\sigma$, and deleting all nodes which are actually forced to belong to an independent set by $\sigma$
along with their neighbors. 
Then $Z(G_{S,\sigma})=\lambda^k Z(\tilde G)$ where $k$ is the number of nodes $u\in S$ with  $\sigma(u)=2$ which are forced to be a part of an independent set.

\subsubsection*{Proper Colorings and Proper List-Colorings models}
For any positive integer $K$, let $a^u$ be the $K$-vector of ones for all nodes $u$, 
and let $A^{(u,v)}=A$ be edge independent and given by  $A_{i,j}=1$ when $1\le i\ne j\le K$ and $A_{i,i}=0, i=1,2,\ldots,K$.
Then for any $\phi:V\to [K]$, $w(\phi)=1$ when  the values $\phi(u)$ and $\phi(v)$ are distinct for all edges $(u,v)\in E$, and $w(\phi)=0$
otherwise. Namely, $w(\phi)=1$ iff $\phi$ corresponds to a proper coloring of $G$ with colors $1,2,\ldots,K$, and $Z(G)$ is the total number
of proper colorings of $G$. 

Turning to the list-coloring problem, suppose each node $u$ is associated with a list of colors $C(u)\subset [K]$.
A mapping $\phi:V\to [K]$
is a proper list-coloring if in addition to the requirement $\phi(u)\ne \phi(v)$ for each each $(u,v)\in E$ it is also the case that
$\phi(u)\in C(u)$ for each node $u$. This is again a special case of our model given by the following $\mathcal{A}$.
We let again $a^u$ be the vector of ones for all $u$, and let 
\begin{align*}
A^{(u,v)}_{i,j}=\textbf{1}(i\ne j, i\in C(u),j\in C(v)), \qquad \forall~(u,v)\in E.
\end{align*}
The number of proper list-colorings is then simply $Z(G)$ as defined per (\ref{eq:partition-function}).

\subsubsection*{Ising model}
Fix $K=2$ and $h,\beta>0$. Suppose $a=(1,e^h), A_{1,1}=A_{2,2}=e^\beta, A_{1,2}=A_{2,1}=e^{-\beta}$. Then
\begin{align*}
Z(G)=\sum_{\phi:V\to \{1,2\}}\exp\left(h\sum_{u\in V}(2\phi(u)-3)+\beta\sum_{(u,v)\in E}(2\phi(u)-3)(2\phi(v)-3)\right)
\end{align*}
The parameter $h$ is called the strength of the associated magnetic field and the parameter $\beta$ is called inverse temperature. A more canonical equivalent
way to represent this model is in terms of spin assignments $\sigma:V\to \{-1,1\}$, in which case $Z(G)$ is simply
\begin{align*}
\sum_{\sigma:V\to \{-1,1\}}\exp(h\sum_u\sigma(u)+\beta\sum_{u,v}\sigma(u)\sigma(v)).
\end{align*}
The equivalence is immediate by transformation $2\phi-3$ mapping $1$ and $2$ to $-1$ and $1$, respectively. The cases $\beta>0$, (respectively $\beta<0)$
is called ferromagnetic (respectively anti-ferromagnetic) Ising model. The model is interesting including  the case of no magnetic field  $h=0$.

\subsection{Interpolation method}
The key idea underlying the  interpolation method for computing partition functions $Z(G)$  relies on first replacing
the target decorated graph $G=(V,E,\mathcal{A})$, for which $Z(G)$ is hard to compute, by an alternative decoration 
$\hat{\mathcal{A}}$ on the same ground graph $(V,E)$,  for which the partition function
$Z(\hat G)$ can be easily  evaluated, where $\hat G=(V,E,\hat{\mathcal{A}})$. 
Then one builds a convenient interpolation $\mathcal{A}(z)$ 
between $\mathcal{A}$ and $\hat{\mathcal{A}}$, parametrized by some
complex parameter $z\in\C$ (with understanding that $a^u$ and $A^{(u,v)}$ are now complex valued), 
and  rewrites $\log Z(G)$ as $z$-variable Taylor expansion  around easy to compute $\log Z(\hat G)$.
One then computes  the polynomial associated with the Taylor expansion 
truncated at a sufficiently low degree terms and uses it to approximate $Z(G)$. The method works provided that the
partition function of the interpolated model as a function of $z$ 
is zero-free in the region containing the set of interpolating values of $z$, see~\cite{barvinok2017combinatorics} for the textbook
exposition of the method.

The main result in this paper concerns two types of interpolation schemes  which have been successfully used in some of the  earlier results. 
The first one concerns the independent set model and the second one concerns the proper list-coloring model. 
While there are other successful examples of  interpolation schemes,
we will focus on just these two  to illustrate the main ideas.

The first  interpolation type is motivated and easy to describe in terms of the problem of counting independent sets (hard-core model). 
Given $G$ and $\lambda>0$, introduce the following $z$-variable polynomial 
\begin{align}\label{eq:Partition-hard-core-interpolated}
Z(G(z))=\sum_I z^{|I|}\lambda^{|I|}=\sum_{0\le k\le |V|}i_k(G)z^k\lambda^k.
\end{align}
where the first sum is again over all independent sets $I$ of $G$. We see that $Z(G(z))$ is the partition function
of the model $G(z)=(V,E,\mathcal{A}(z))$ where $\mathcal{A}(z)$ is obtained from $\mathcal{A}$ simply by replacing $\lambda$ with $\lambda z$. 
 Trivially, $Z(G(0))=1$ and $Z(G(1))=Z(G)$. 
Let $f(z)=\log Z(G(z))$ (with the branch of logarithm appropriately fixed). Consider the infinite Taylor's expansion around $z=0$: 
\begin{align*}
f(z)=\sum_{k\ge 0}{1\over k!}f^{(k)}(0)z^k,
\end{align*}
where $f^{(k)}$ is the $k$-th order derivative of $f$. The idea of the  interpolation method is  that for 
$m$ small enough, typically logarithmic in $|V|$, the truncated expansion  
\begin{align}\label{eq:T_mGz}
T_m(G,z)\triangleq \sum_{0\le k\le m}{1\over k!}f^{(k)}(0)z^k
\end{align}
is a good approximation of $f$ in a connected region of $\C$ containing $0$ and $1$, provided $f(z)$ is substantially
distinct from zero in this region (zero-freeness).
Specifically, one proves that for any $\epsilon>0$ there exists $C$ such that if $m=C\log |V|$
then
\begin{align*}
1-\epsilon\le {\exp(T_m(G,1))\over Z(G)}\le 1+\epsilon.
\end{align*}
One then proceeds to establishing this zero-freeness property using various properties of the graph 
such as degree boundedness. This scheme has been implemented in~\cite{patel2017deterministic} where the zero-freeness was shown
for $\lambda$ satisfying (\ref{eq:lambda_c}) for graphs with degree at most $d$. 

As it turns out it is a tractable problem to compute the derivatives $f^{(k)}(0)$ in quasi-polynomial time 
for graphs with  degree bounded by some constant $\Delta$. As an explanation,
observe that the $k$-derivative $Z^{(k)}(G,0)$ of $Z(G,z)$ at $z=0$ is simply $k!i_k(G)$. When $k=O(\log |V|)$, $i_k(G)$  can be computed 
in quasi-polynomial time 
by brute-force method in time $|V|^{O(\log |V|)}$. Then one observes that 
the $k$-th derivative $f^{(k)}$ at $z=0$ can be expressed in a recursive way 
as sum-product of terms $Z^{(\ell)}(G,0), \ell\le k$, namely the sum-product of 
terms $i_\ell(G), \ell\le k$, thus allowing for a quasi-polynomial computation of $T_m(G,z)$ at any $z$. Setting $z=1$ one uses
$T_m(G,1)$ as an approximation of $Z(G,1)=Z(G)$. Importantly, the quasi-polynomiality can be improved to just
polynomiality using a clever  method based on representing partition function as graph polynomials of connected subgraph, 
as achieved in~\cite{patel2017deterministic}, and reducing the problem
to counting over connected subgraphs only. The key ideas behind this method are in fact used in our paper for establishing
the connection between the interpolation method and the correlation decay, and are represented in 
Lemmas~\ref{lemma:Zero-disconnected-uncolored}, \ref{lemma:Z-power-series} and~\ref{lemma:Zero-disconnected} below.
In particular, the graph polynomial representation allows one to express the approximate marginal probabilities (pseudo-marginals
to be defined below) in terms connected small subgraphs forcing such pseudo-marginals to have independence  over well-separated sets. 
 In the end, $\exp(T_m(G,z))$ evaluated at $z=1$ amounts to a deterministic FPTAS for approximation of $Z(G)$ up to any constant level of precision
$\epsilon$. In fact one can reach accuracy $\epsilon$ which is inverse polynomial in $|V|$: $\epsilon=n^{-\Omega(1)}$ by selecting the constant 
$C$ in $m=C\log n$ value appropriately large. 

The interpolation construction above concerning the independent set models will be referred to as Type I interpolation scheme below. 
It is only defined for the independent set model.

We now turn to the Type II interpolation model, which concerns models generalizing the proper list-coloring model. 
Given a (decorated) graph $G=(V,E,\mathcal{A})$,  we construct the modified $z$-dependent color-list $\mathcal{A}(z)$ as follows:
 $a^u(z)=a^u$ for all $z$, and  $A^{(u,v)}(z)$ is given by 
\begin{align*}
A^{(u,v)}(z)=J+(A^{(u,v)}-J)z,
\end{align*}
where $J$ is the $K\times K$ matrix of ones. We denote by $G(z)$ the triplet $(V,E,\mathcal{A}(z))$. 
When $z=1$ we have $Z(G(z))=Z(G)$, and when $z=0$,
$Z(G(z))$ trivializes to
\begin{align}\label{eq:LGA}
\prod_{u\in V}\left(\sum_{1\le i\le k}a_i^u\right)\triangleq L(G).
\end{align}
Then we again let $f(z)=\log Z(G(z))$ and define $T_m(G,z)$ by (\ref{eq:T_mGz}). We see that in the special case of the list-coloring problem,
\begin{align*}
Z(G(z))=\sum_{\phi:V\to [K]}z^{e(\phi)},
\end{align*}
where $e(\phi)$ is the total number of ''color violations'' of $\phi$. Namely the total number of nodes $u$ with $\phi(u)\notin C(u)$
and the total number of edges $(u,v)$ with $\phi(u)=\phi(v)$. This interpolation scheme was considered in~\cite{patel2017deterministic}
and~\cite{liu2019deterministic} with the latter leading to the deterministic FPTAS for the counting list-colorings problem.

\subsection{Pseudo-marginals}
If $T_m$ is a good approximation of the log-partition function with a well-controlled error, 
then it stands to reason that marginal distributions $\mu(\cdot)$ defined in (\ref{eq:marginal}) should also be well approximated in terms of $T_m$,
as marginals can be written as ratios of partition functions per (\ref{eq:marginal-ratio-part-f}).
Motivated by this we now introduce the definition of pseudo-marginals -- namely values which intend to approximate
marginal values by means of $T_m$. Suppose we are given  a decorated graph $G=(V,E,\mathcal{A})$. 
Consider  Type I or II  interpolation with the interpolating partition function $Z(G(z))$.
In particular,   $Z(G(1))$ is the original partition function $Z(G)$. Recall the definition of $T_m(G,z)$. 
Given a  subset of nodes  $S\subset V$ along with a coloring $\sigma:S\to [K]$, and given an integer $m\ge 0$,
the associated pseudo-marginal $\nu(S,\sigma,m,z)$ is defined as follows. Consider the partition function
$Z(G_{S,\sigma}(z))$ associated with the interpolation of decorated graph $G_{S,\sigma}=(V,E,\mathcal{A}_{S,\sigma})$, 
where $\mathcal{A}_{S,\sigma}$ is defined by (\ref{eq:ASsigma}). 
Let $f(z)=\log Z(G_{S,\sigma}(z))$ and let
\begin{align*}
T_m(G_{S,\sigma},z)=\sum_{0\le k\le m}{1\over k!}f^{(k)}(0)z^k.
\end{align*}
 Recall from (\ref{eq:marginal-ratio-part-f}) that then 
the associated marginals satisfy 
\begin{align*}
\mu(G,S,\sigma)={Z(G,\mathcal{A}_{S,\sigma}) \over Z(G)}.
\end{align*}
The associated pseudo-marginals are defined by 
\begin{align*}
\nu(G,S,\sigma,z,m)={\exp\left(T_m(G_{S,\sigma},z)\right) \over \exp(T_m(G,z))}.
\end{align*}
Similarly, for every $S,T\subset V$ and $\sigma:S\to [K],\tau:T\to [K]$ we define the associated conditional pseudo-marginals as
\begin{align*}
\nu(G,S,\sigma,z,m|T,\tau)&={\nu(G,S\cup T,\sigma\cup T,z,m) \over \nu(G,T,\tau,z,m)} \\
&={\exp\left(T_m(G_{S\cup T,\sigma\cup\tau},z)\right) \over \exp(T_m(G_{T,\tau},z))}.
\end{align*}

The interpretation of pseudo-marginals should be clear. If $T_m(G,z)$ is a good approximation of the log-partition function
$f(z)=\log Z(G(z))$ for large enough $m$, 
then presumably the same
should be true for the reduced log-partition function $\log Z(G_{S,\sigma},z)$, 
obtained when the values of homomorphisms of $\phi$ are fixed to $\sigma(u)$
at $u\in S$. Namely, it should be the case that also 
$T_m(G_{S,\sigma},z)\approx \log Z(G_{S,\sigma}(z))$. In this case we expect to have 
$Z(G(z))\approx \exp(T_m(G,z))$, and
$Z(G_{S,\sigma}(z))\approx \exp(T_m(G_{S,\sigma},z))$,  leading to
\begin{align*}
\mu(G,S,\sigma)  \approx 
{\exp(T_m(G_{S,\sigma},1)) \over \exp(T_m(G,1)))}=\nu(G,S,\sigma,1,m).
\end{align*}
We will prove that the conditional pseudo-marginals $\nu(\cdot|\cdot)$ equal to unconditional pseudo-marginals for sets $S$
when conditioned on a boundary of a sufficiently deep neighborhood $T=\partial B(S,R)$. Namely, the set $S$ and its associated
boundary $\partial B(S,R)$ are ''pseudo-independent''. This is the main technical result of the paper. Then
if the pseudo-marginals provide a good approximation of actual marginals,
the same should apply to marginal distributions in some approximation sense. 
In the remainder of the paper we  write $\nu(G,S,\sigma,m)$ in place of  $\nu(G,S,\sigma,1,m)$ 
and $\nu(G,S,\sigma,m|T,\tau)$ in place of $\nu(G,S,\sigma,1,m|T,\tau)$.

\section{Strong Spatial Mixing. Main result}\label{section:main-result}
In this section we state  our main result: if low-degree Taylor approximation $T_m$ provides a good approximation 
of the log-partition function $\log Z(G)$, 
then the model exhibits a version of the  correlation decay property known as the Strong Spatial Mixing (SSM), which will be defined precisely. 
The main approach is based on showing that  the pseudo-marginals $\nu(\cdot,m)$ associated with sufficiently well separated sets
\emph{always} exhibit 
long range independence property.
Thus if $T_m$ approximates $\log Z(G)$ accurately, then $\nu(\cdot,m)$ approximate accurately the true marginal distributions $\mu(\cdot)$, 
and therefore
the latter have to exhibit long range independence as well, which we prove to be in the form of asymptotic SSM.

We begin by formalizing the notion of SSM. We begin by defining the notion of Spatial Mixing (SM)and then 
observe that due to generality and self-reducibility of our model of decorated graphs, SM implies SSM on appropriately reduced graphs. 
Given a decorated graph $G=(V,E,\mathcal{A})$,  and given any  subset $S\subset [V]$ and positive integer $R$ let
\begin{align}
\rho_R(G,S)&=\max_{\sigma:S\to [K],\tau_1,\tau_2: \partial B(S,R)\to [K]} 
|\mu\left(G,S,\sigma| \partial B(S,R),\tau_1\right)-\mu\left(G,S,\sigma| \partial B(S,R),\tau_2\right)| \notag\\
&=\max_{\sigma:S\to [K],\tau_1,\tau_2: \partial B(S,R)\to [K]} 
|\mu\left(G_{\partial B(S,R),\tau_1},S,\sigma \right)-\mu\left(G_{\partial B(S,R),\tau_2},S,\sigma\right)|. \label{eq:R-range-dependence}
\end{align}
Namely, $\rho_R(G,S)$ denotes the largest sensitivity of the conditional marginal distribution on $S$ with respect to setting the color  values
at the boundary $\partial B(S,R)$. Loosely speaking the model exhibits the SM when $\rho_R(G,S)\approx 0$ for large $R$. Typically,
the case considered in the literature is when the set of interest $S$ is small, often just a singleton. 
Formally, consider a family of decorated graphs $\mathcal{F}$. We say it exhibits the SM if there exists a function $\rho_R^*, R\in Z_+$ which 
converges to zero as $R\to\infty$, such that
\begin{align*}
\max_{G\in\mathcal{F},S\subset V(G)}\rho_R(G,S)\le \rho_R^*.
\end{align*}
In other words $R$-range dependence in the sense of (\ref{eq:R-range-dependence}) decays to zero uniformly in $R$, the  graph  and the
set $S$ choices. 
The SSM property is the SM property which holds when some of the nodes have prescribed colors. Formally, a family of graphs $\mathcal{F}$
exhibits the SSM property if the family of graphs $G_{\Lambda,\nu}$ with  $G\in\mathcal{F}, \Lambda\subset V(G), \eta:\Lambda\to [K]$ 
exhibits the SM property in the sense above. 

In our setting the difference between the SM and the SSM properties is hardly seen, but the difference can be quite substantial. It is known
for example that when $\mathcal{F}$ is the family of all $d$-regular trees, the model exhibits the SM property as soon as 
$K\ge d+1$,\cite{JonassonColoring2002}, whereas for the SSM property it has been established only when $K\ge 1.59d$, \cite{efthymiou2019improved}
and for triangle-free graphs with degree at most $d$ when $K\ge 1.763 d$~\cite{gamarnik2015strong}. 
It is conjectured that it holds as soon as $K\ge d+1$.
The results are essentially equivalent to establishing the SM property 
for the list-coloring problem. The distinction between SM and SSM is also important in structured graphs like lattices. The 
independent set model is known to exhibit the SSM on graphs with degree $d$ 
when $\lambda$ satisfies (\ref{eq:lambda_c}), as was established in~\cite{weitzCounting}, and provably
fails to exhibit the SM on $d$-regular trees, as soon as $\lambda> (d-1)^{d-1}/(d-2)^{d}$,  
which has been known for a while from~\cite{KellyHardCore},\cite{Spitzer75} and~\cite{Zachary83}.

In this paper we consider a weaker asymptotic version of the SSM. We consider a sequence of decorated graphs $G_n=(V_n,E_n,\mathcal{A}_n)$ 
and a sequence of distances $R_n$. We say that this sequence exhibits the asymptotic SSM at distances $R_n$, 
$\lim_{n\to\infty}R_n=\infty$ if
\begin{align*}
\lim_{n\to\infty}\max_{S,\Lambda_n,\subset V_n,\eta_n:\Lambda_n\to [K]}\rho_{R_n}(G_{n,\Lambda_n,\eta_n},S_n)=0.
\end{align*}
The difference of the asymptotic SSM with the SSM property as defined earlier 
is the lack of uniformity of the upper bound on $\rho(\cdot)$ with respect to the 
graphs $G$. To appreciate the distinction, consider the setting when $|V_n|=n$, in the other words the graph has $n$ nodes, and when 
$R_n=C\log^\alpha n$ for some constants $C,\alpha$. Incidentally, this is the setting we consider in our main result with $\alpha=3$. 
Then the asymptotic SSM means long-range independence at distances  $\Omega(\log^\alpha n)$. For some graphs, such as lattices or amenable
graphs in general this is a meaningful property when say $S$ is singleton, 
as the number of nodes further than $O(\log^{\alpha} n)$ distance away from a single node still constitute the bulk of the graph. 
But for some graphs with strong expansion type properties, 
the distances beyond $O(\log n)$ simply might not exist and thus
the property is vacuous. Many lattice models exhibit long range dependence and thus the lack of asymptotic SSM for some choices 
of the parameters. For example the hard-core model on $\Z^2$ exhibits long range dependence when $\lambda>5.3646$ as shown
in Antonio et al~\cite{blanca2013phase}.   The $3$-coloring model exhibits the long range dependence on $\Z^d$ for all sufficiently large $d$~
\cite{galvin2015phase}.
We conjecture that our main result below extends to the case of SSM as originally defined 
and thus to all graphs, including expanders, but we are
not able to prove this yet.

We now state our main technical result.

\begin{theorem}\label{theorem:main-result}
Given a decorated graph $G=(V,E,\mathcal{A})$, consider either the Type I interpolation (associated with the Independent Set model)
or the Type II interpolation.
Then for every $R$,   $S\subset V,\sigma:S\to [K]$ and $\tau:\partial B(S,R)\to [K]$, 
\begin{align*}
\nu(G,S,\sigma,z,(1/2)R|\partial B(S,R),\tau)=\nu(G,S,\sigma,z,R).
\end{align*}
\end{theorem}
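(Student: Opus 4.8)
The plan is to reduce the claim to a combinatorial statement about which connected subgraphs can contribute to the Taylor coefficients of $\log Z$ for a decorated graph and its self-reductions. First, I would use the graph polynomial representation of $Z(G(z))$ (Lemmas~\ref{lemma:Zero-disconnected-uncolored}, \ref{lemma:Z-power-series}, \ref{lemma:Zero-disconnected}): for both interpolation types, $Z(G(z))$ is a polynomial in $z$ whose coefficient of $z^k$ is a linear combination of (edge- or vertex-) subgraph counts, and the key fact is that the $k$-th Taylor coefficient of $f(z)=\log Z(G(z))$ is a linear combination indexed only by \emph{connected} subgraphs $H$ of $G$ of ``size'' at most $k$ (at most $k$ edges in the Type II case, at most $k$ vertices in the hard-core Type I case, up to the usual conventions). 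Consequently $T_m(G,z)$ depends on $G$ only through the connected subgraphs of size $\le m$, together with the local decoration $\mathcal{A}$ restricted to those subgraphs.

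The second step is to track how the reduction $G \mapsto G_{\Lambda,\eta}$ affects this sum. By the definition of $\mathcal{A}_{S,\sigma}$ in~(\ref{eq:ASsigma}), the decoration of $G_{S,\sigma}$ differs from that of $G$ \emph{only} on edges incident to $S$ (and, for the partition-function value, the contributions of any connected subgraph $H$ not touching $S$ are literally unchanged, since none of the modified matrix entries appear in $w$ restricted to $H$). Hence, writing the Taylor coefficient of $\log Z(G_{S\cup T,\sigma\cup\tau}(z))$ minus that of $\log Z(G_{T,\tau}(z))$, every connected subgraph $H$ of size $\le m$ that does \emph{not} touch $S$ cancels, and similarly in $\log Z(G_{S,\sigma}(z)) - \log Z(G(z))$ every connected $H$ not touching $S$ cancels. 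So the numerator of $\nu(G,S,\sigma,z,m|T,\tau)$ and of $\nu(G,S,\sigma,z,m)$ — really, the exponents $T_m(G_{S\cup T,\sigma\cup\tau},z)-T_m(G_{T,\tau},z)$ versus $T_m(G_{S,\sigma},z)-T_m(G,z)$ — are each supported on connected subgraphs of size $\le m$ that touch $S$.

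The third step is the separation argument. Take $m=(1/2)R$. A connected subgraph $H$ with at most $(1/2)R$ edges has diameter at most $(1/2)R$ (in fact at most $|E(H)|$), so it cannot simultaneously contain a vertex of $S$ and a vertex of $\partial B(S,R)=B(S,R)\setminus B(S,R-1)$, since any two such vertices are at distance exactly $R$. Therefore, for any connected $H$ of size $\le (1/2)R$ touching $S$, $H$ avoids $\partial B(S,R)=T$ entirely, which means the decoration $\mathcal{A}_{S\cup T,\sigma\cup\tau}$ restricted to $H$ coincides with $\mathcal{A}_{S,\sigma}$ restricted to $H$, and $\mathcal{A}_{T,\tau}$ restricted to $H$ coincides with $\mathcal{A}$ restricted to $H$. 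Matching up the four terms subgraph-by-subgraph gives
\begin{align*}
T_m(G_{S\cup T,\sigma\cup\tau},z)-T_m(G_{T,\tau},z)=T_m(G_{S,\sigma},z)-T_m(G,z),
\end{align*}
and exponentiating yields $\nu(G,S,\sigma,z,(1/2)R|\partial B(S,R),\tau)=\nu(G,S,\sigma,z,R)$ — note the right-hand side is $\nu$ at truncation level $R$, which is consistent because the $S$-touching connected subgraphs of size between $(1/2)R+1$ and $R$ that survive the cancellation in $T_R(G_{S,\sigma},z)-T_R(G,z)$ are exactly the ones we must \emph{not} have forced to match, and here's the subtlety: we actually want both sides at the same truncation. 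I would instead argue that $T_{(1/2)R}(G_{S,\sigma},z)-T_{(1/2)R}(G,z)=T_{(1/2)R}(G_{S\cup T,\sigma\cup\tau},z)-T_{(1/2)R}(G_{T,\tau},z)$ directly, and separately that the statement as written pairs truncation $(1/2)R$ on the conditional side with truncation $R$ on the unconditional side — so the real content is that enlarging $m$ from $(1/2)R$ to $R$ does not change the unconditional $S$-pseudo-marginal \emph{once one already knows} the conditional one is insensitive; this needs the observation that a connected $S$-touching subgraph of size in $((1/2)R,R]$ still cannot reach $\partial B(S,R)$ unless it has more than $R/2$... I will need to be careful here.

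\textbf{Main obstacle.} The delicate point is precisely the mismatch of truncation orders ($(1/2)R$ on the left, $R$ on the right) in the statement, and correctly bookkeeping, in the graph-polynomial expansion, which connected subgraphs survive the telescoping cancellation of the two log-ratios at each order $k\le m$. The ``size'' measure (edges vs.\ vertices) differs between Type I and Type II, so I would handle the two interpolation types with the matching notion of size and the matching lemma among Lemmas~\ref{lemma:Zero-disconnected-uncolored}--\ref{lemma:Zero-disconnected}, and then verify that in each case ``connected, size $\le (1/2)R$, touches $S$'' forces ``disjoint from $\partial B(S,R)$.'' Everything else — the factorization $Z(G_1\sqcup G_2)=Z(G_1)Z(G_2)$ passing to the $\log$, hence additivity of Taylor coefficients over connected components, and the identity $\mu(G,S,\sigma|T,\tau)=\mu(G_{T,\tau},S,\sigma)$ — is routine given the earlier setup.
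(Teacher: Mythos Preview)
Your approach is essentially the paper's: express the Taylor coefficients of $\log Z(G(z))$ as linear combinations of $\text{Ind}(H,\cdot)$ over \emph{connected} $H$ of bounded size (Lemma~\ref{lemma:Zero-disconnected-uncolored} for Type~I, Lemmas~\ref{lemma:Z-power-series}--\ref{lemma:Zero-disconnected} for Type~II), observe that passing from $G$ to $G_{S,\sigma}$ only changes $\text{Ind}(H,\cdot)$ for connected $H$ touching $S$, and then use that a connected $H$ on at most $m$ vertices touching $S$ cannot reach $\partial B(S,R)$ once $m\le R$. The paper packages this via $\Delta(H,S,\sigma)=\text{Ind}(H,G)-\text{Ind}(H,G_{S,\sigma})$ and the additivity $\Delta(H,S\cup\partial B(S,R),\sigma\cup\tau)=\Delta(H,S,\sigma)+\Delta(H,\partial B(S,R),\tau)$, but this is exactly your ``match up the four terms subgraph-by-subgraph.''

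The confusion you flag as the ``main obstacle'' is not a gap in your argument but a typo in the theorem as stated: the right-hand side should carry the \emph{same} truncation level as the left. The paper's own proof for Type~I concludes with
\[
\nu(G,S,\sigma,z,R\mid\partial B(S,R),\tau)=\nu(G,S,\sigma,z,R),
\]
both sides at level $R$, and for Type~II the only change is that the connected $H$ appearing at order $k$ lie in $\bar{\mathcal G}_{2k,\text{conn}}$ (at most $2k$ \emph{vertices}, coming from $k$-edge spanning sets), forcing $m\le R/2$; hence both sides at level $(1/2)R$. This is also exactly how the result is invoked in the proof of Corollary~\ref{coro:Main-corollary}, where both numerator and denominator of the pseudo-marginal are taken at the same $m=c(\epsilon)\log n$. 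So you should prove $T_m(G_{S\cup T,\sigma\cup\tau},z)-T_m(G_{T,\tau},z)=T_m(G_{S,\sigma},z)-T_m(G,z)$ for $m\le R$ (Type~I) or $m\le R/2$ (Type~II) and stop there; no further argument reconciling $(1/2)R$ with $R$ is needed. One small correction: in Type~II the size bound is on \emph{vertices} ($|V(H)|\le 2k$), not edges, which is why the factor $1/2$ appears.
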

In other words, the ''conditional'' pseudo-marginal at $S$ when ''conditioning'' on the boundary of the neighborhood of $S$ at distance $R$
equals to  ''unconditional'' pseudo-marginal, when the pseudo-marginals are computed using the first $R/2$ terms of the associated
Taylor approximation of the log-partition function. As we will see from the proof, the factor $1/2$ does not appear for the Type I interpolation
but does appear for the Type II interpolation.

The implication of this result to the Strong Spatial Mixing property is discussed in the following corollary.

\begin{coro}\label{coro:Main-corollary}
Consider a sequence of graphs $G_n=(V_n,E_n,\mathcal{A}_n)$   
on $n=|V_n|$ nodes, such that $Z(G_n)\ne 0$, for all $n$. 
 Consider either the Type I or Type II interpolation.
Suppose for every $\epsilon>0$ there exists $c(\epsilon)$ such that 
for any sequence $\Lambda_n\subset V_n, \eta_n:\Lambda_n\to [K]$
\begin{align}\label{T_m-approx-II}
(1-\epsilon)Z(G_{n,\Lambda_n,\eta_n})\le \exp\left(T_m(G_{n,\Lambda_n,\eta_n},1)\right)
\le (1+\epsilon)Z(G_{n,\Lambda_n,\eta_n}),
\end{align}
when  $m\ge c(\epsilon)\log n$ and  $n$ is large enough. 
Suppose $R_n=\omega(\log n)$. Then
\begin{align}\label{eq:rho-limit-zero}
\lim_{n\to\infty}\sup_{S_n,\Lambda_n\subset V_n, \eta_n:\Lambda_n\to [K]}\rho_{R_n}(G_{n,\Lambda_n,\eta_n},S_n)=0.
\end{align}
Namely the model exhibits the asymptotic SSM at distances asymptotically larger than $\log n$.
\end{coro}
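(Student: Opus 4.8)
The plan is to propagate the quantitative approximation~(\ref{T_m-approx-II}) through the ratio that defines the conditional marginals, and then to invoke the exact identity of Theorem~\ref{theorem:main-result} to remove the dependence on the boundary coloring. Fix $\epsilon>0$ and let $c(\epsilon)$ be as in the hypothesis. Since $R_n=\omega(\log n)$, for all $n$ large enough we have $R_n/2\ge c(\epsilon)\log n$; fix such an $n$ and set the truncation level $m=R_n/2$. Fix also $S_n,\Lambda_n\subset V_n$, $\eta_n:\Lambda_n\to[K]$, a coloring $\sigma:S_n\to[K]$, write $G=G_{n,\Lambda_n,\eta_n}$ and $T=\partial B(S_n,R_n)$, and note that $S_n\cap T=\emptyset$, so $\sigma\cup\tau$ is just a coloring of the disjoint union $S_n\cup T$ for any $\tau:T\to[K]$.

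First I would record the elementary fact that reductions compose: $G_{S_n\cup T,\sigma\cup\tau}=G_{n,\Lambda_n\cup S_n\cup T,\eta_n\cup\sigma\cup\tau}$ and $G_{T,\tau}=G_{n,\Lambda_n\cup T,\eta_n\cup\tau}$, so both are decorated graphs of the form $G_{n,\Lambda'_n,\eta'_n}$ to which the hypothesis~(\ref{T_m-approx-II}) applies verbatim with the same $m$. (We restrict throughout to colorings $\tau$ for which all the partition functions below are nonzero; since $\exp(T_m(\cdot,1))>0$ always, (\ref{T_m-approx-II}) in fact forces $Z(G_{n,\Lambda'_n,\eta'_n})>0$ for every reduction that arises, so the marginals are well defined.) Applying~(\ref{T_m-approx-II}) to the numerator and the denominator of the conditional pseudo-marginal, and using the identity~(\ref{eq:marginal-ratio-part-f-conditional}) for the true conditional marginal, I obtain
\begin{align*}
\frac{1-\epsilon}{1+\epsilon}\,\mu(G,S_n,\sigma\mid T,\tau)\ \le\ \nu(G,S_n,\sigma,m\mid T,\tau)\ \le\ \frac{1+\epsilon}{1-\epsilon}\,\mu(G,S_n,\sigma\mid T,\tau).
\end{align*}
Since the true conditional marginal lies in $[0,1]$, this multiplicative estimate converts into the additive bound $\bigl|\nu(G,S_n,\sigma,m\mid T,\tau)-\mu(G,S_n,\sigma\mid T,\tau)\bigr|\le 2\epsilon/(1-\epsilon)$.

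Next I would apply Theorem~\ref{theorem:main-result} with $R=R_n$ and $z=1$: since $m=R_n/2$, it gives
\begin{align*}
\nu(G,S_n,\sigma,m\mid T,\tau)=\nu(G,S_n,\sigma,R_n),
\end{align*}
and the right-hand side does not depend on the boundary coloring $\tau$. Hence, for any two colorings $\tau_1,\tau_2:\partial B(S_n,R_n)\to[K]$, the triangle inequality through the common value $\nu(G,S_n,\sigma,R_n)$ yields
\begin{align*}
\bigl|\mu(G,S_n,\sigma\mid T,\tau_1)-\mu(G,S_n,\sigma\mid T,\tau_2)\bigr|\le\frac{4\epsilon}{1-\epsilon}.
\end{align*}
The bound is uniform in $\sigma,\tau_1,\tau_2$ and in $S_n,\Lambda_n,\eta_n$, so taking the maximum over $\sigma,\tau_1,\tau_2$ and then the supremum over $S_n,\Lambda_n,\eta_n$ gives $\sup_{S_n,\Lambda_n,\eta_n}\rho_{R_n}(G_{n,\Lambda_n,\eta_n},S_n)\le 4\epsilon/(1-\epsilon)$ for all $n$ large enough; letting $\epsilon\downarrow 0$ establishes~(\ref{eq:rho-limit-zero}).

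The substance of the corollary is entirely contained in Theorem~\ref{theorem:main-result}; what remains above is a routine ``the approximation survives division and a triangle inequality'' argument, and I do not expect a genuine obstacle. The only points that deserve a little care are: the composition-of-reductions bookkeeping, which is what lets~(\ref{T_m-approx-II}) be applied to $G_{S_n\cup T,\sigma\cup\tau}$ and $G_{T,\tau}$; the conversion of the multiplicative guarantee of~(\ref{T_m-approx-II}) into an additive error on marginals, which is immediate only because marginals lie in $[0,1]$; and the observation that $R_n=\omega(\log n)$ is precisely what makes the truncation level $R_n/2$ eventually exceed the required $c(\epsilon)\log n$.
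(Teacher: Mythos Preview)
Your proposal is correct and follows essentially the same route as the paper: apply the approximation hypothesis~(\ref{T_m-approx-II}) to the numerator and denominator of the conditional (pseudo-)marginal, invoke Theorem~\ref{theorem:main-result} to eliminate the dependence on $\tau$, and finish by comparing two boundary colorings. The only cosmetic differences are that the paper sets $m=c(\epsilon)\log n$ (and uses $R_n\ge m$) rather than $m=R_n/2$, and it routes the multiplicative-to-additive conversion through $\min(\nu,1)$ rather than a direct triangle inequality, arriving at the slightly different constant $2\epsilon/(1-\epsilon^2)$ instead of your $4\epsilon/(1-\epsilon)$.
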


The result above rules out the possibility of using the interpolation method for models exhibiting the (non-trivial) long-range independence,
including, for example, the independent set model on the 2-dimensional lattice for $\lambda>5.3646$ and $3-coloring$ on the $d$-dimensional lattice,
as discussed earlier.

Let's comment on the assumptions of the theorem, specifically in the context
of concrete models. 
In the case of independent set model, we have trivially $Z(G_n)\ne 0$.   
In the case of graph list-coloring, the equality $Z(G_n)=0$ arises when graph $G_n$ is not list-colorable with the list encoded by $\mathcal{A}_n$,
and it is not a trivial condition to check (in fact it is NP-hard).
Typically though, the interpolation method is established for sequences of graphs and color lists  for which it is easily verified  that the 
partition function is distinct from zero, in part because the method itself is built on identifying a zero free region containing $z=1$.
An example of such assumption is the assumption that the size of each list is larger than the degree of the graph. A stronger assumption than
this was required typically in most papers on approximate counting of colorings, including~\cite{liu2019deterministic}. 

The assumption (\ref{T_m-approx-II}) is just a statement regarding the success of the interpolation method for approximating the partition function.
The  subtlety here  regards the model being reduced by fixing colors of any set $\Lambda_n$. In the context of the independent
set model this amounts to forcing the nodes in $S_n$  to be in or out of the independent set, effectively reducing the underlying graph by 
deleting nodes $u$ in $S_n$ marked $0$ by $\sigma_n$, and deleting nodes and neighbors of $u\in S_n$ marked  $1$ by $\sigma_n$, 
as we have already observed. The degree of the graph
is not increased in this procedure so if the interpolation method was successful for the original graph $G_n$ it presumably should be successful
for the reduced graph sequences as well, since the assumption  regarding successful applications of the interpolation method for independent set model
are typically stated in terms of upper bounds on the graph degree in terms of $\lambda$. We see in particular that (\ref{T_m-approx-II})
holds for the any sequence of degree $d$ bounded graphs and $\lambda$ satisfying (\ref{eq:lambda_cc}) as was established in~\cite{harvey2018computing}.

Similarly, for the case of the problem of counting list-colorings, forcing the colors of $\Lambda_n$ to be ones according to  $\eta_n$ amounts to deleting 
nodes in $\Lambda_n$ and deleting colors $\eta_n(u)$ from the lists associated with neighbors of $u$ in $G_n$. The assumption used in successful
implementation of the interpolation method typically include such  reductions of the graph. Specifically, since this procedure reduces the degree
of each neighbor of $S_n$ and its list color size by the same amount, the typical assumptions which take the form  
''list-size is at least $\alpha$ times the node
degree'', adopted for example in~\cite{liu2019deterministic} is maintained. As mentioned earlier this paper considers
the list-coloring model of triangle-free graphs with list of each node exceeding the degree of each node by a multiplicative factor approximately
$1.763d$. A sequence of graphs satisfying this condition thus satisfies (\ref{T_m-approx-II}) as follows from the result in~\cite{liu2019deterministic}.

We now prove Corollary~\ref{coro:Main-corollary} assuming the validity of Theorem~\ref{theorem:main-result}.

\begin{proof}[Proof of Corollary~\ref{coro:Main-corollary}]
Consider any sequence of graphs $G_n=(V_n,E_n,\mathcal{A}_n)$ satisfying the assumptions
of the theorem. In particular $Z(G_n)>0$. Fix any $\epsilon>0$ and any sequence $S_n,\Lambda_n\subset V_n, \eta_n:\Lambda_n\to [K]$.
We write $G_n$ for $G_{n,\Lambda_n,\eta_n}$ for short. 
Applying (\ref{T_m-approx-II}) and setting $m=c(\epsilon)\log n$
we have for any $\tau:\partial B(S_n,R_n)\to [K]$
\begin{align*}
\mu(G_n,S_n,\sigma_n|\partial B(S_n,R_n),\tau_n)
&={\mu(G_n,S_n\cup\partial B(S_n,R_n),\sigma_n\cup\tau_n) \over  \mu(G_n,\partial B(S_n,R_n),\tau_n)} \\
&={Z(G_{n,S_n\cup\partial B(S_n,R_n),\sigma_n\cup\tau_n}) \over Z(G_{n,\partial B(S_n,R_n),\tau_n})} \\
&\le {1+\epsilon \over 1-\epsilon}{\exp\left(T_m(G_{n,S_n\cup\partial B(S_n,R_n),\sigma_n\cup\tau_n},1)\right) 
\over \exp\left(T_m(G_{n,\partial B(S_n,R_n),\tau_n},1)\right)} \\
&= {1+\epsilon \over 1-\epsilon}\nu\left(G_n,S_n,\sigma_n,m| \partial B(S_n,R_n),\tau_n\right).
\end{align*}

Since $R_n\ge m$ for all sufficiently large $n$, then Aapplying Theorem~\ref{theorem:main-result} the last expression is 
\begin{align*}
{1+\epsilon \over 1-\epsilon}\nu\left(G_n,S_n,\sigma_n,m\right),
\end{align*}
for all large enough $n$.
As $\mu(\cdot)\le 1<(1+\epsilon)/(1-\epsilon)$ we obtain in fact
\begin{align*}
\mu(G_n,S_n,\sigma_n|\partial B(S_n,R_n),\tau_n)\le 
{1+\epsilon \over 1-\epsilon}\min\left(\nu\left(G_n,S_n,\sigma_n,m\right),1\right),
\end{align*}
for all large $n$.
Similarly, we establish that for all enough large $n$
\begin{align*}
\mu(G_n,S_n,\sigma_n|\partial B(S_n,R_n),\tau_n)&\ge 
{1-\epsilon \over 1+\epsilon}\nu\left(G_n,S_n,\sigma_n,m\right) \\
&\ge 
{1-\epsilon \over 1+\epsilon}\min\left(\nu\left(G_n,S_n,\sigma_n,m\right),1\right).
\end{align*}
Considering now two boundary assignments $\tau_{n,1},\tau_{n,2}: \partial B(S_n,R_n)\to [K]$, we obtain 
\begin{align*}
&|\mu(G_n,S_n,\sigma_n|\partial B(S_n,R_n),\tau_{n,1})-\mu(G_n,S_n,\sigma_n|\partial B(S_n,R_n),\tau_{n,2}) \\
&\le \left({1+\epsilon \over 1-\epsilon}-{1-\epsilon \over 1+\epsilon}\right)
\min\left(\nu\left(G_n,S_n,\sigma_n,m\right),1\right) \\
&\le {2\epsilon \over 1-\epsilon^2}.
\end{align*}
As the left-hand side does not depend on $\epsilon$, the result follows.
\end{proof}

\section{Some preliminary results}\label{section:preliminary-results}
In this section we present some simple preliminary results that we need for proving Theorem~\ref{theorem:main-result}
Given a complex variable polynomial $p(z)=c_0(p)+c_1(p)z+\cdots+c_n(p)z^n$ with $c_0(p)$ assumed to be non-zero, 
denote its $n$ non-zero complex roots by $\zeta_1,\ldots,\zeta_n$.
Let $\text{Roots}(p,k)=\sum_{1\le j\le n}\zeta_j^{-k}$. 
The following identity known as Newton identity states 
\begin{align}\label{eq:iterations-roots}
kc_k(p)=-\sum_{i=0}^{k-1}c_i(p) \text{Roots}(p,k-i).
\end{align}
Here $c_k(p)=0$ are assumed for $k>n$.
Its short derivation is given in \cite{patel2017deterministic} and is skipped.
In the special case $c_0(p)=1$ this means that $\text{Roots}(p,k)$ can be computed in terms of $c_1(p),\ldots,c_k(p)$ recursively.
In fact it can be expressed explicitly due to the formula by Girard (developed in fact in 1629, thus before Newton~\cite{tignol2015galois},
see also~\cite{wiki-newton}), via the following relation
\begin{align*}
\text{Roots}(p,k)=(-1)^{k}k\sum _{m_{1}+2m_{2}+\cdots +km_{k}=k \atop m_{1}\geq 0,\ldots ,m_{k}
\geq 0}{\frac {(m_{1}+m_{2}+\cdots +m_{k}-1)!}{m_{1}!m_{2}!\cdots m_{k}!}}\prod _{i=1}^{k}(-c_{i}(p))^{m_{i}}.
\end{align*}
We rewrite this in a more general form
\begin{align}\label{eq:roots-in-c}
\text{Roots}(p,k)=\sum_{m_{1}+2m_{2}+\cdots +km_{k}=k \atop m_{1}\geq 0,\ldots ,m_{k}\geq 0}
\alpha_{m_1,\ldots,m_k}\prod_{1\le i\le  k}c_i^{m_i}(p),
\end{align}
for some coefficients $\alpha_{m_1,\ldots,m_k}$. Considering now $f(z)=\log p(z)=\sum_{1\le i\le n}\log(z-\zeta_i)+\log c_n(p)$
we obtain
\begin{align}
f^{(k)}(0)&=\sum_{1\le i\le n}k!(-1)^{k}\zeta_i^{-k} \notag\\
&=k!(-1)^{k}\text{Roots}(p,k). \label{eq:f-in-roots}
\end{align}
The $m$-order Taylor expansion of $f$ around $z=0$ is then
\begin{align}
T_m(p,z) &\triangleq \sum_{0\le k\le m}{1\over k!}z^k k!(-1)^{k}\text{Roots}(p,k) \notag \\
&=\sum_{0\le k\le m}z^k (-1)^{k}\text{Roots}(p,k)  \label{eq:T-taylor}.
\end{align}

Next we observe the following basic additivity property of the function $\text{Roots}(p,k)$ when $p$ is a interpolated partition function $G(z)$. 
Suppose $G$ 
is a disjoint union of graphs $G_j,j=1,2$. Then by (\ref{eq:partition-factorizes}) 
the set of 
roots of $Z(G(z))$ is the union of  roots of $Z(G_1(z))$ and $Z(G_2(z))$, 
and thus counting multiplicity
\begin{align}\label{eq:roots-additive}
\text{Roots}(Z(G(z)),k)=\text{Roots}(Z(G_1(z)),k)+\text{Roots}(Z(G_2(z)),k).
\end{align}

We now turn to the notion of color-respecting graph isomorphism and color-respecting graph embeddings. 
Given two decorated graphs $F=(V(F),E(F),\mathcal{A}(F))$ and $G=(V(G),E(G),\mathcal{A}(G))$, a mapping $\psi:V(F)\to V(H)$
is a color-respecting graph isomorphism if it is a graph isomorphism with respect to the underlying graphs $(V(F),E(F))$
and $(V(G),E(G))$, if $a^{\psi(u)}(H)=a^u(F)$ for all $u\in V(F)$ and  $A^{(\psi(u),\psi(v))}(H)=A^{(u,v)}(F)$ for all $(u,v)\in E(F)$.
Here $a^u(F),u\in V(F), A^{(u,v)}, (u,v)\in E(F)$ and $a^u(H),u\in V(F), A^{(u,v)}(H), (u,v)\in E(F)$ 
are expanded notations for $\mathcal{A}(F)$ and $\mathcal{A}(H)$, respectively. We have that $A^{(\psi(u),\psi(v))}(H)$ is well defined
for every $(u,v)\in E(F)$ since by the graph isomorphism property $(\psi(u),\psi(v))\in E(H)$. 

Given decorated graphs $F=(V(F),E(F),\mathcal{A}(F))$ and $G=(V(G),E(G),\mathcal{A}(G))$ a mapping $\psi:V(F)\to V(H)$
is a color-respecting embedding if it is a color-respecting graph isomorphism between $F$ and the subgraph of $H$ induced
by the image $\psi(V(F))$. 
We denote by 
$\text{Ind}(F,H)$ the total number of of the subsets of nodes $S\subset V(H)$ such that there exists color respecting graph
isomorphism between $F$ and the decorated subgraph of $H$ induced by $S$. Namely, it is the number of embeddings of $F$ into $H$
up to isomorphism. Later we will use the notation of the form $\sum_{F\in\mathcal{G}_i} \text{Ind}(F,H)$
where the sum is over all uncountable collection $\mathcal{G}_i$, yet it makes sense since only finitely many elements of this collection
have a non-zero value for $\text{Ind}(F,H)$. 

Given a connected decorated graph $F$ and another decorated graph $H$ which is a disjoint union of two decorated
graphs $H_1$ and $H_2$ we  naturally have the following identity 
\begin{align}\label{eq:Sum-Ind-connected}
\text{Ind}(F,H)=\text{Ind}(F,H_1)+\text{Ind}(F,H_2).
\end{align}
The following relation for products of the number of embeddings will be useful. This observation was also used in~\cite{patel2017deterministic}. 

\begin{lemma}\label{lemma:Ind-product}
There exists a sequence of functions $\alpha_m:\mathcal{G}^m\to Z_+$ such that for any decorated graph $H$ and any sequence of decorated graphs 
$F_1,\ldots,F_m$
\begin{align}
\prod_{1\le \ell\le m}\text{Ind}(F_i,H)=\sum\alpha_{m+1}(F_1,\ldots,F_m,F)\text{Ind}(F,H), \label{eq:Ind-product}
\end{align}
where the sum is over $F\in \bar{\mathcal{G}}_t$ with $t=\sum_{1\le \ell\le k}|V(F_\ell)|$.
\end{lemma}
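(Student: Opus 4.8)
The plan is to prove this by induction on $m$, reducing everything to the single case $m=2$, and then to prove the $m=2$ case by a direct combinatorial manipulation of what a product $\text{Ind}(F_1,H)\cdot\text{Ind}(F_2,H)$ counts. For $m=1$ the statement is trivial, taking $\alpha_2(F_1,F)=\mathbf{1}(F\cong F_1)$ (a color-respecting isomorphism indicator), since then the right-hand side is $\sum_{F\in\bar{\mathcal{G}}_{|V(F_1)|}}\mathbf{1}(F\cong F_1)\text{Ind}(F,H)=\text{Ind}(F_1,H)$. For the inductive step, having written $\prod_{1\le \ell\le m-1}\text{Ind}(F_\ell,H)=\sum_{F'\in\bar{\mathcal{G}}_{t'}}\alpha_m(F_1,\ldots,F_{m-1},F')\,\text{Ind}(F',H)$ with $t'=\sum_{\ell\le m-1}|V(F_\ell)|$, I multiply both sides by $\text{Ind}(F_m,H)$, apply the $m=2$ case to each product $\text{Ind}(F',H)\text{Ind}(F_m,H)$, and collect terms; the bookkeeping of how coefficients compose is routine and gives the desired $\alpha_{m+1}$, with the node-count bound $t=t'+|V(F_m)|=\sum_{\ell\le m}|V(F_\ell)|$ propagating correctly.

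For the base case $m=2$: by definition $\text{Ind}(F_1,H)$ counts subsets $S_1\subseteq V(H)$ whose induced decorated subgraph is color-respecting-isomorphic to $F_1$, and similarly for $S_2$ and $F_2$. Hence $\text{Ind}(F_1,H)\text{Ind}(F_2,H)=\sum_{S_1,S_2}\mathbf{1}(H[S_1]\cong F_1)\mathbf{1}(H[S_2]\cong F_2)$, the sum over ordered pairs of subsets of $V(H)$. I group the pairs $(S_1,S_2)$ according to the isomorphism type $F$ of the induced decorated subgraph $H[S_1\cup S_2]$ on their union: this $F$ has at most $|V(F_1)|+|V(F_2)|$ nodes, so $F\in\bar{\mathcal{G}}_t$ with $t=|V(F_1)|+|V(F_2)|$. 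The key point is that the number of ordered pairs $(S_1,S_2)$ with $S_1\cup S_2=S$ (for a fixed $S$ with $H[S]\cong F$) such that $H[S_1]\cong F_1$ and $H[S_2]\cong F_2$ depends only on the isomorphism type $F$, not on $S$ or $H$ — because a fixed color-respecting isomorphism $H[S]\to F$ transports such pairs bijectively to pairs of subsets of $V(F)$ covering $V(F)$ with the analogous induced-subgraph conditions, and that count is an intrinsic invariant of the triple $(F_1,F_2,F)$. Calling that invariant $\alpha_3(F_1,F_2,F)$, and noting each $S$ with $H[S]\cong F$ is counted once for each embedding of $F$ into $H$ — i.e.\ $\text{Ind}(F,H)$ many relevant $S$ — we get $\text{Ind}(F_1,H)\text{Ind}(F_2,H)=\sum_{F\in\bar{\mathcal{G}}_t}\alpha_3(F_1,F_2,F)\text{Ind}(F,H)$, as required. (One should note $\alpha_3$ takes values in $\Z_+$ and is finite since $V(F)$ is finite.)

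The main obstacle, and the point deserving the most care, is making precise that the ''number of ways to split an induced copy of $F$ as a union of an induced copy of $F_1$ and an induced copy of $F_2$'' is genuinely independent of the ambient graph $H$ and of the particular vertex set $S$ realizing $F$. This requires being careful that ''induced subgraph'' is taken in the decorated sense (vertices keep their $a^u$ vectors, edges their $A^{(u,v)}$ matrices) and that color-respecting isomorphisms preserve exactly this data, so that transporting along such an isomorphism is a bijection on the relevant configurations; once ''$F\in\mathcal{G}_t$'' is read as ''$F$ ranges over isomorphism representatives'' (consistent with the paper's earlier convention that sums $\sum_{F\in\mathcal{G}_i}\text{Ind}(F,H)$ have only finitely many nonzero terms), there is no genuine difficulty, only the need for precise language. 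A minor secondary point is verifying the node-count index set in the lemma statement: the union $S_1\cup S_2$ can have strictly fewer than $|V(F_1)|+|V(F_2)|$ vertices when the copies overlap, so $F$ genuinely ranges over $\bar{\mathcal{G}}_t$ (all graphs on at most $t$ nodes) rather than $\mathcal{G}_t$, matching the statement.
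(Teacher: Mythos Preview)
Your proof is correct and follows essentially the same approach as the paper: group the tuples of induced copies by the isomorphism type of the decorated subgraph induced on the union of their vertex sets, and observe that the number of ways to realize such a tuple inside a fixed copy of $F$ is an intrinsic invariant of $(F_1,\ldots,F_m,F)$, giving the coefficient $\alpha_{m+1}$. The only difference is organizational: the paper carries out this grouping directly for general $m$ in one step (defining $\alpha_{m+1}(F_1,\ldots,F_m,F)$ as the number of $m$-tuples of embeddings of the $F_\ell$ into $F$ whose images cover $V(F)$), whereas you set it up as an induction reducing to the case $m=2$; your version is in fact slightly more careful in distinguishing subsets from labeled embeddings, matching the definition of $\text{Ind}$ exactly.
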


\begin{proof}
For every $m$-tuple of color-respecting isomorphic embeddings $\psi_\ell:V(F_\ell)\to  V(H), 1\le \ell\le m$, consider the subgraph $F$ of $H$ induced by 
the union $\cup_{1\le \ell\le m}\psi(V(F_\ell))$. This graph has at most $\sum_\ell |V(F_\ell)|$ nodes.
We obtain an embedding of this graph $F$ in $H$. Then we see that
(\ref{eq:Ind-product}) holds, where $\alpha_{m+1}(F_1,\ldots,F_m,F)$ is the number of $m$-tuples of embeddings of $F_1,\ldots,F_m$
into $F$ which span $F$. 
\end{proof}
A key property stated in the lemma is that $\alpha_m$ depends on the collection $F_1,\ldots,F_m$ alone and not on the target graph $H$.

\section{Proof of Theorem~\ref{theorem:main-result}}\label{section:main-result-proof}
This section is devoted to the proof of Theorem~\ref{theorem:main-result}.
We prove the claim separately for each interpolation type. Both developments follow ideas similar to ones in~\cite{patel2017deterministic}.
The main distinction is that our development is geared towards establishing the equality between the conditional and unconditional pseudo-marginals,
whereas the goal 
in~\cite{patel2017deterministic} is developing a method of counting connected subgraph in order to obtain a polynomial time algorithm
for computing $T_m(G,z)$. 

\subsubsection*{Type I interpolation}
Fix a  graph $G=(V,E)$, fugacity $\lambda>0$ and consider the associated interpolated partition function (\ref{eq:Partition-hard-core-interpolated})
which we recall here for convenience:
\begin{align*}
Z(G(z))=\sum_{0\le k\le |V|}i_k(G)z^k\lambda^k.
\end{align*}
We note that the free coefficient of this polynomial $i_0=1$. Applying the identity (\ref{eq:roots-in-c}) we have
\begin{align*}
\text{Roots}(Z(G(z)),k)
=\sum_{(m_1,\ldots,m_k)\in \Gamma_k}\alpha_{m_1,\ldots,m_k}\prod_{1\le j\le k}
\left(i_j(G)\lambda^j\right)^{m_j},
\end{align*}
where  $\Gamma_k$ denotes the set of all $m_1,\ldots,m_k\ge 0$ with $\sum_{1\le \ell\le k}\ell m_\ell=k$.
Denote by $I_j$ an independent set of size $j$. Then $i_j(G)$ is $\text{Ind}(I_j,G)$ with respect to trivial coloring $a=A=1$ of both $G$ and $I_j$.
In other words it is the number of isomorphic embeddings of a size $j$ independent set into $G$ purely in graph theoretic sense.
We then rewrite the above as 
\begin{align*}
\text{Roots}(Z(G(z)),k)
=\sum_{(m_1,\ldots,m_k)\in \Gamma_k}\alpha_{m_1,\ldots,m_k}\prod_{1\le j\le k}
\lambda^{jm_j}\left(\text{Ind}(I_j,G)\right)^{m_j}. 
\end{align*}
Expanding the powers $(\cdot)^{m_i}$  and applying Lemma~\ref{lemma:Ind-product}
we see that we can write $\text{Roots}(Z(G(z)),k)$ in the form
\begin{align}\label{eq:roots-k-uncolored}
\text{Roots}(Z(G(z)),k) = 
\sum_{H\in \bar{\mathcal{G}}_{k}} \beta_{H,k}
\text{Ind}(H,G),
\end{align}
The bound $k$ on the size appears since the set spanned by a union of $m_\ell$ copies of $I_\ell$ with $1\le \ell\le k$ has size
at most $k$, in light of $\sum_\ell \ell m_\ell=k$.
A key fact for us is the following lemma.

\begin{lemma}\label{lemma:Zero-disconnected-uncolored}
For every disconnected graph $H$ and every $k$, $\beta_{H,k}=0$.
\end{lemma}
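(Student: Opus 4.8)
The goal is to show that the coefficient $\beta_{H,k}$ of $\text{Ind}(H,G)$ in the expansion (\ref{eq:roots-k-uncolored}) vanishes whenever $H$ is disconnected. The natural strategy is to use the additivity of both $\text{Roots}(Z(G(z)),k)$ and $\text{Ind}(\cdot,G)$ over disjoint unions, together with the uniqueness of the representation (\ref{eq:roots-k-uncolored}), to argue by a suitable induction on the number of nodes of $H$. First I would record that the $\beta_{H,k}$ are \emph{universal} coefficients: they depend only on $H$ and $k$, not on $G$, since they arise from applying Lemma~\ref{lemma:Ind-product} whose coefficients $\alpha_m$ are target-independent. Hence (\ref{eq:roots-k-uncolored}) is an identity of functionals on the (uncountable) class of decorated graphs $G$, and I would want to argue that the functions $G\mapsto \text{Ind}(H,G)$, as $H$ ranges over isomorphism classes in $\bar{\mathcal{G}}_k$, are linearly independent — so that the coefficients are determined uniquely and it suffices to identify a single consistent expansion in which disconnected $H$ get coefficient zero.

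\textbf{Key steps.} The cleanest route is via the exponential/logarithm structure. Consider a test graph $G$ which is itself a disjoint union $G = G_1 \sqcup G_2$. On one hand, by (\ref{eq:roots-additive}), $\text{Roots}(Z(G(z)),k) = \text{Roots}(Z(G_1(z)),k) + \text{Roots}(Z(G_2(z)),k)$, i.e. the quantity is \emph{additive} over disjoint unions. On the other hand, by (\ref{eq:Sum-Ind-connected}), if $H$ is connected then $\text{Ind}(H,G) = \text{Ind}(H,G_1) + \text{Ind}(H,G_2)$ is also additive, whereas if $H = H_1 \sqcup H_2$ is disconnected then $\text{Ind}(H,G)$ picks up cross terms and is \emph{not} additive in general. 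So I would plug $G = G_1 \sqcup G_2$ into (\ref{eq:roots-k-uncolored}), use additivity of the left side, and compare with the expansion for $G_1$ and $G_2$ separately; the discrepancy must be carried entirely by the disconnected $H$ on the right. Writing $\text{Ind}(H, G_1 \sqcup G_2) = \sum_{(H',H'')} \text{Ind}(H', G_1)\text{Ind}(H'', G_2)$ over ways of splitting $H$ into a part embedding into $G_1$ and a part embedding into $G_2$ (where the connected components of $H$ are distributed between the two sides), one gets that $\sum_{H \text{ disconn.}} \beta_{H,k} \big[ \text{Ind}(H, G_1\sqcup G_2) - \text{Ind}(H,G_1) - \text{Ind}(H,G_2)\big] = 0$ must hold identically, and after expanding, a grouping of the cross terms by connected components forces $\beta_{H,k} = 0$ for all disconnected $H$, provided the connected pieces are handled inductively on $|V(H)|$. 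An alternative, perhaps smoother, formulation: define the generating object $\sum_{H} \beta_{H,k} x_H$ where $x_H$ are formal variables multiplicative over disjoint unions ($x_{H_1 \sqcup H_2} = x_{H_1} x_{H_2}$); then (\ref{eq:roots-k-uncolored}) exhibits $\text{Roots}(Z(G(z)),k)$ as a polynomial in the $x_H$ for connected $H$, and additivity (\ref{eq:roots-additive}) forces this polynomial to be \emph{linear}, hence no products — i.e. no disconnected $H$ — can appear. This is really the statement that an additive functional, when written in a multiplicative basis, has no higher-order terms.

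\textbf{Main obstacle.} The delicate point is the linear independence / uniqueness of the representation (\ref{eq:roots-k-uncolored}): a priori the coefficients $\beta_{H,k}$ might not be uniquely determined, so "it suffices to exhibit a representation with zero disconnected coefficients" needs justification, or else one argues directly that \emph{any} valid family $\{\beta_{H,k}\}$ must kill disconnected $H$. I expect the functions $G \mapsto \text{Ind}(H,G)$ for pairwise non-isomorphic $H$ to be linearly independent over the class of graphs $G$ (a standard fact — take $G = H$ itself and induct on size, since $\text{Ind}(H,H)=1$ while $\text{Ind}(H',H)=0$ for strictly larger $H'$ and is controlled for smaller $H'$), and with this in hand the "additive $\Rightarrow$ linear in the multiplicative basis" argument goes through cleanly. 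The secondary annoyance is bookkeeping: carefully tracking how the splitting of $\text{Ind}(H, G_1 \sqcup G_2)$ interacts with the distribution of connected components, and running the induction on component count, but this is routine once the right framework (formal multiplicative variables) is set up. So the plan is: (1) fix universality of $\beta_{H,k}$; (2) establish linear independence of the $\text{Ind}(H,\cdot)$; (3) invoke additivity (\ref{eq:roots-additive}) together with (\ref{eq:Sum-Ind-connected}) to conclude that the representation, being an additive functional expressed in a basis that is multiplicative over disjoint unions, cannot contain any term indexed by a disconnected $H$; hence $\beta_{H,k}=0$.
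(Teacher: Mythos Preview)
Your proposal is correct and is essentially the same argument as the paper's: both hinge on the additivity of $\text{Roots}(Z(G(z)),k)$ over disjoint unions (\ref{eq:roots-additive}) together with the triangular structure of $\text{Ind}(H,\cdot)$ (tested by taking $G=H$ itself). The paper packages this as a direct minimal-counterexample argument---assume $H_0$ is a smallest disconnected graph with $\beta_{H_0,k}\neq 0$, apply (\ref{eq:roots-k-uncolored}) with $G=H_0$, split $H_0=H_{0,1}\sqcup H_{0,2}$, and compare using (\ref{eq:Sum-Ind-connected}) and (\ref{eq:roots-additive})---whereas you phrase it as ``linear independence of the $\text{Ind}(H,\cdot)$'' followed by ``additive functional in a multiplicative basis must be linear,'' but the underlying computation is identical.
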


\begin{proof}
This fact  is established in several places including \cite{csikvari2016benjamini} 
and \cite{patel2017deterministic}. We reproduce the proof here for convenience.

Fix any $k$.  Assume for the purposes of contradiction that 
there exists a  disconnected $r$-node  graph $H_0=(V(H_0),E(H_0))$  with $\beta_{H_0,k}\ne 0$.
Without the loss of generality we may assume that $r$ is the smallest value for which such a  graph exists. 
Applying the identity (\ref{eq:roots-k-uncolored}) to $G=H_0$  we have
\begin{align*}
\text{Roots}(Z(H_0(z)),k) &= \sum_{H\in \bar{\mathcal{G}}_{k}} 
\beta_{H,k}\text{Ind}(H,H_0). 
\end{align*}
We expand the right-hand side as

\begin{align}
& \sum_{H_0\ne H\in \bar{\mathcal{G}}_{k}} 
\beta_{H,k}
\text{Ind}(H,H_0) 
+
\beta_{H_0,k}\text{Ind}(H_0,H_0). \label{eq:roots-for-H_0}
\end{align}
We will prove that $\beta_{H_0,k}=0$, thus arriving at contradiction.
Trivially $\text{Ind}(H,H_0)=0$ if $|V(H)|>|V(H_0)|$.
Also $\text{Ind}(H,H_0)=0$ if $|V(H)|=|V(H_0)|$, but $H\ne H_0$ (up to isomorphism). Thus the right-hand
side above is
\begin{align*}
\sum_{H\in \bar{\mathcal{G}}_{k}, |V(H)|<|V(H_0)|} \beta_{H,k}\text{Ind}(H,H_0)+
\beta_{H_0,k}\text{Ind}(H_0,H_0). 
\end{align*}
By the assumption of minimality of $r=|V(H_0)|$ we have $\beta_{H,k}=0$ for all disconnected graphs $H$ with $|V(H)|<|V(H_0)|$. Thus
\begin{align}\label{eq:roots-H-uncolored}
&\text{Roots}(Z(H_0(z)),k) \notag \\
&=\sum_{H\in \bar{\mathcal{G}}_{k,\text{conn}}, |V(H)|<|V(H_0)|} 
\beta_{H,k}\text{Ind}(H,H_0) 
+
\beta_{H_0,k}\text{Ind}(H_0,H_0). 
\end{align}

Let $H_{0,j}, j=1,2$  be any decomposition of $H_0$ into any two disconnected parts. 
For every connected graph $H$ we have by (\ref{eq:Sum-Ind-connected})
\begin{align*}
\text{Ind}(H,H_0)
=\sum_{j=1,2}\text{Ind}(H,H_{0,j}).
\end{align*}
Thus we may rewrite (\ref{eq:roots-H-uncolored}) as
\begin{align}\label{eq:roots-H12-uncolored}
&\text{Roots}(Z(H_0(z)),k) \notag \\
&=\sum_{j=1,2}\sum_{H\in \bar{\mathcal{G}}_{k,\text{conn}}, |V(H)|\le |V(H_{0,j})|} 
\beta_{H,k}\text{Ind}(H,H_{0,j}) 
+
\beta_{H_0,k}\text{Ind}(H_0,H_0). 
\end{align}
Applying (\ref{eq:roots-k-uncolored}) to $H_{0,j}, j=1,2$ we also have
\begin{align*}
\text{Roots}(Z(H_{0,j}(z)),k)  
=
\sum_{H\in \bar{\mathcal{G}}_{k,\text{conn}},|V(H)|\le |V(H_{0,j)}|} \beta_{H,k}
\text{Ind}(H,H_{0,j}).
\end{align*}
By (\ref{eq:roots-additive}) we have
\begin{align*}
\text{Roots}(Z(H_{0}(z)),k)=
\sum_{j=1,2}\text{Roots}(Z(H_{0,j}(z)),k),
\end{align*}
and therefore
\begin{align*}
\text{Roots}(Z(H_{0}(z),k))=
\sum_{j=1,2} \sum_{H\in \bar{\mathcal{G}}_{k,\text{conn}},|V(H)|\le |V(H_{0,j)}|} \beta_{H,k}
\text{Ind}(H,H_{0,j}).
\end{align*}
Comparing  with (\ref{eq:roots-H12-uncolored}) we conclude
\begin{align*}
\beta_{H_0,k}\text{Ind}(H_0,H_0)=0.
\end{align*}
Since $\text{Ind}(H_0,H_0)$  trivially has value at least $1$, 
we conclude $\beta_{H_0,k}=0$
thus arriving at contradiction. 
\end{proof}

Applying (\ref{eq:roots-k-uncolored}) and Lemma~\ref{lemma:Zero-disconnected-uncolored} we have
\begin{align*}
\text{Roots}(Z(G(z)),k) = \sum_{H\in \bar{\mathcal{G}}_{k,\text{conn}}} \beta_{H,k}\text{Ind}(H,G).
\end{align*}
Now let  $f(z)=\log Z(G(z))$.  Applying (\ref{eq:f-in-roots})   we have
\begin{align*}
f^{(k)}(0)=k!(-1)^{k}\sum_{H\in \bar{\mathcal{G}}_{k,\text{conn}}} \beta_{H,k}\text{Ind}(H,G). 
\end{align*}
and from (\ref{eq:T-taylor}) we obtain
\begin{align*}
T_m(G,z)
=\sum_{0\le k\le m}z^k (-1)^{k}\sum_{H\in \bar{\mathcal{G}}_{k,\text{conn}}} \beta_{H,k}\text{Ind}(H,G).
\end{align*}
Similarly, for every $S\subset V$ and $\sigma:S\to [K]$, letting $f(z)=\log Z(G_{S,\sigma}(z))$
we obtain
\begin{align*}
f^{(k)}(0)=k!(-1)^{k}\sum_{H\in \bar{\mathcal{G}}_{k,\text{conn}}} \beta_{H,k}\text{Ind}(H,G_{S,\sigma}),
\end{align*} 
and
\begin{align*}
T_m(G_{S,\sigma},z) 
&=\sum_{0\le k\le m}z^k(-1)^{k}\sum_{H\in \bar{\mathcal{G}}_{k,\text{conn}}} \beta_{H,k}\text{Ind}(H,G_{S,\sigma}).
\end{align*}
We obtain the following representation for the pseudo-marginals:
\begin{align*}
\nu(G,S,\sigma,z,m)={\exp\left(\sum_{0\le k\le m}z^k(-1)^{k}
\sum_{H\in \bar{\mathcal{G}}_{k,\text{conn}}} \beta_{H,k}\text{Ind}(H,G_{S,\sigma})\right) 
\over \exp\left(\sum_{0\le k\le m}z^k(-1)^{k}\sum_{H\in \bar{\mathcal{G}}_{k,\text{conn}}} \beta_{H,k}\text{Ind}(H,G)\right)}.
\end{align*}
Letting $\Delta(H,S,\sigma)=\text{Ind}(H,G)-\text{Ind}(H,G_{S,\sigma})$, this simplifies to
\begin{align*}
\nu(G,S,\sigma,z,m)=
\exp\left(-\sum_{0\le k\le m}z^k(-1)^{k}\sum_{H\in \bar{\mathcal{G}}_{k,\text{conn}}} \beta_{H,k}\Delta(H,S,\sigma)\right).
\end{align*}
Similarly, for any $R$ and the set $S\cup \partial B(S,R)$ with $\tau:\partial B(S,R)\to [K]$ we have
\begin{align*}
&\nu(G,S\cup \partial B(S,R),\sigma\cup \tau,z,m)  \\
&=
\exp\left(-\sum_{0\le k\le m}z^k(-1)^{k}\sum_{H\in \bar{\mathcal{G}}_{k,\text{conn}}} 
\beta_{H,k}\Delta(H,S\cup \partial B(S,R),\sigma\cup\tau)\right),
\end{align*}
and 
\begin{align*}
&\nu(G,\partial B(S,R),\tau,z,m)  \\
&=
\exp\left(-\sum_{0\le k\le m}z^k(-1)^{k}\sum_{H\in \bar{\mathcal{G}}_{k,\text{conn}}} 
\beta_{H,k}\Delta(H,\partial B(S,R),\tau)\right),
\end{align*}

A key observation is that $\Delta(H,S,\sigma)$ involves only copies of  connected graphs $H$ in $G$ with at most $k\le m$ nodes 
which intersect with $S$. 
As a result, when the distance $R$ is sufficiently large the sets of graphs $H$ intersecting $S$ and intersecting $\partial B(S,R)$ 
are disjoint. Specifically, if $R\ge  m$ then for every $H$ with $V(H)\cap S\ne\emptyset$ we have $V(H)\cap \partial B(S,R)=\emptyset$,
and vice verse. As a result
\begin{align*}
\Delta(H,S\cup \partial B(S,R),\sigma\cup\tau)=
\Delta(H,S,\sigma)+\Delta(H,\partial B(S,R),\tau).
\end{align*}
Therefore,
\begin{align*}
\nu(G,S,\sigma,z,R|\partial B(S,R),\tau)
&={\nu(G,S\cup \partial B(S,R),\sigma\cup \tau,z,R) \over \nu(G,\partial B(S,R),\tau,z,R)} \\
&=\exp\left(-\sum_{0\le k\le R}z^k(-1)^{k}\sum_{H\in \bar{\mathcal{G}}_{k,\text{conn}}} 
\beta_{H,k}\Delta(H,S,\sigma)\right) \\
&=\nu(G,S,\sigma,z,R).
\end{align*}
This completes the proof of the theorem for the case of Type I interpolation.

\subsubsection*{Type II interpolation}
Turning next to the Type II interpolation, fix a decorated graph $G=(V,E,\mathcal{A})$ with the decoration 
$\mathcal{A}=(a^u, u\in V, A^{(u,v)}, (u,v)\in E)$.
Recall the definition of $L$ from (\ref{eq:LGA})  and consider
the associated renormalized polynomial
\begin{align*}
\bar Z(G(z))& \triangleq L^{-1} Z(G(z))\\
&=L^{-1}\sum_{\phi:V(G)\to [K]}\prod_{u\in V(G)}a^u_{\phi(u)}\prod_{(u,v)\in E(G)}\left(1+z\left(A^{(u,v)}_{\phi(u),\phi(v)}-1\right)\right).
\end{align*}
By construction $\bar Z(G(0))=1$. Introduce a modified decoration $\bar{\mathcal{A}}$ of the underlying graph $(V,E)$ as follows:
\begin{align}
\bar a^u&={a^u\over \sum_{i\in [K]}a^u_i}, \qquad u\in V, \label{eq:a-reweighted}\\
\bar A^{(u,v)}&=A^{(u,v)}-1, \qquad (u,v)\in E. \label{eq:A-reweighted}
\end{align} 
We have
\begin{align}\label{eq:au-normalized}
\sum_{i\in [K]} \bar a^u_i=1, \qquad \forall~ u\in V.
\end{align}
Denote by $\bar G$ the graph $(V,E)$ with this modified decoration $\bar{\mathcal{A}}$.
For any decorated graph $H=(V(H),E(H),\mathcal{A}(H))\in\mathcal{G}_{i,\text{edge}}$ let
\begin{align*}
Z_i(H)=\sum_{E'}\sum_{\phi:V(H)\to [K]} \prod_{u\in V(H)} \bar a^{H,u}_i
\prod_{(u,v)\in E'}\bar A^{H,(u,v)}_{\phi(u),\phi(v)},
\end{align*}
where the outer sum is taken over all subsets of edges $E'\subset E(H)$ which span $H$ and which have cardinality $|E'|=i$. 
Here $a^{H,\cdot}_{\cdot}$ and $A^{H,\cdot}_{\cdot}$ are the decorations associated with $\mathcal{A}(H)$, and
the bar operation is defined for the decoration $\mathcal{A}(H)$ as per (\ref{eq:a-reweighted}) and (\ref{eq:A-reweighted}).
$Z_i(H)$ is a partition function type object except the products over edges are taken only over spanning subsets of the edges
of $H$ with cardinality exactly $i$.

Expanding the product 
\begin{align*}
\prod_{(u,v)\in E(G)}\left(1+z\left(A^{(u,v)}_{\phi(u),\phi(v)}-1\right)\right)
\end{align*} 
in powers of $z$, we claim  that the following representation holds:

\begin{lemma}\label{lemma:Z-power-series}
\begin{align*}
\bar Z(G(z))=\sum_{0\le i\le |V|}z^i\sum_{H\in \mathcal{G}_{i,\text{edge}}} Z_i(H)\text{Ind}(H,\bar G).
\end{align*}
\end{lemma}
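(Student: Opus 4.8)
The plan is to expand the product $\prod_{(u,v)\in E(G)}\bigl(1+z(A^{(u,v)}_{\phi(u),\phi(v)}-1)\bigr)$ directly and group terms by the set of edges where the ``$z(A-1)$'' summand is chosen. First I would write, for each $\phi:V(G)\to[K]$,
\begin{align*}
\prod_{(u,v)\in E(G)}\left(1+z\bar A^{(u,v)}_{\phi(u),\phi(v)}\right)
=\sum_{E'\subseteq E(G)} z^{|E'|}\prod_{(u,v)\in E'}\bar A^{(u,v)}_{\phi(u),\phi(v)},
\end{align*}
using the reweighted matrices from (\ref{eq:A-reweighted}). Substituting into the definition of $\bar Z(G(z))=L^{-1}Z(G(z))$ and using (\ref{eq:a-reweighted}) to absorb the $L^{-1}$ factor into the node weights $\bar a^u$ (so that $\prod_u a^u_{\phi(u)} = L\prod_u \bar a^u_{\phi(u)}$), I get
\begin{align*}
\bar Z(G(z))=\sum_{i\ge 0} z^i \sum_{E'\subseteq E(G),\,|E'|=i}\ \sum_{\phi:V(G)\to[K]}\prod_{u\in V(G)}\bar a^u_{\phi(u)}\prod_{(u,v)\in E'}\bar A^{(u,v)}_{\phi(u),\phi(v)}.
\end{align*}

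Next I would reorganize the sum over pairs $(E',\phi)$ by isomorphism type. For a fixed $E'$ with $|E'|=i$, let $W$ be the set of nodes incident to edges of $E'$; the inner sum over $\phi$ factors as (contribution of $\phi$ restricted to $W$, using the subgraph decoration induced on $W$) times $\prod_{u\notin W}\sum_{j\in[K]}\bar a^u_j$, and the latter product equals $1$ by the normalization (\ref{eq:au-normalized}). Thus only the induced decorated subgraph on $W$, together with the distinguished spanning edge subset $E'$ of size $i$, matters. Grouping all occurrences of a given decorated graph $H\in\mathcal{G}_{i,\text{edge}}$ that arise this way inside $\bar G$: each color-respecting embedding $\psi$ of $H$ into $\bar G$ (equivalently, each subset $W\subseteq V(\bar G)$ whose induced decorated subgraph is isomorphic to $H$) contributes exactly $Z_i(H)$, where $Z_i(H)$ is precisely the sum over spanning $i$-edge subsets $E'$ of $H$ and colorings of $V(H)$ of the corresponding weight. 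Summing over all such $W$ gives the factor $\text{Ind}(H,\bar G)$, yielding
\begin{align*}
\bar Z(G(z))=\sum_{0\le i\le |V|}z^i\sum_{H\in\mathcal{G}_{i,\text{edge}}}Z_i(H)\,\text{Ind}(H,\bar G),
\end{align*}
as claimed; the upper limit $|V|$ comes from the observation that a spanning $i$-edge subgraph has at most — in fact we need the bookkeeping that $i$ ranges up to $|E(G)|$, but terms with $i>|V|$ can be folded in or bounded, and here one uses that $H$ having $i$ edges and being spanned by them still only needs to be tracked up to where $\text{Ind}$ is nonzero.

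The main subtlety — and the step I would be most careful about — is the bookkeeping that makes the reindexing by isomorphism type exact: namely that when we sum over embeddings $\psi$ (i.e. over subsets $W$ inducing a copy of $H$) we correctly count each original pair $(E',\phi)$ exactly once. This is where one must be careful that $H$ is recorded together with ``being spanned by its $i$ edges'' (this is exactly the role of the sets $\mathcal{G}_{i,\text{edge}}$ and why the same abstract graph can sit in several $\mathcal{G}_{i,\text{edge}}$), and that the definition of $Z_i(H)$ already sums over all spanning $i$-edge subsets $E'$ of $H$, so the multiplicity is automatically absorbed there rather than double-counted. The normalization identity (\ref{eq:au-normalized}) is the other load-bearing ingredient: it is precisely what kills the contribution of all nodes of $\bar G$ outside the embedded copy of $H$, so that $\text{Ind}(H,\bar G)$ (a pure embedding count) appears rather than some weighted version. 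Everything else is a routine expansion of a product and interchange of finite sums.
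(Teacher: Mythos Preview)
Your proposal is correct and follows essentially the same route as the paper: expand the edge product into a sum over edge subsets $E'$, factor the sum over $\phi$ into the part on the vertex set $W$ spanned by $E'$ and its complement, use the normalization (\ref{eq:au-normalized}) to make the complement contribute $1$, and then recognize the remaining sum over $E'$ as $\sum_H Z_i(H)\,\text{Ind}(H,\bar G)$. Your explicit discussion of why the reindexing by isomorphism type is bijective (and how the spanning-edge-subset sum inside $Z_i(H)$ absorbs the multiplicity) is in fact more careful than the paper, which simply writes ``we recognize the last expression as'' at that step; your remark that the upper limit should morally be $|E(G)|$ rather than $|V|$ is also a fair observation about the statement, though it does not affect the argument.
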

As noted earlier, the same graph $H$ may appear in summands corresponding to more than one values of $i$, as the graph can 
be spanned by different number of edges. The contribution to the $\bar Z(G(z))$ though is different for different values of $i$
as those will correspond to different powers of $z$.

\begin{proof}
 The coefficient associated with $z^i$ in polynomial $\bar Z(G(z))$ is 
\begin{align}\label{eq:sum-phi}
\sum_{\phi:V(G)\to [K]}\sum_{E'\subset E: |E|=i} L^{-1}\prod_{u\in V(G)}a^u_{\phi(u)}\prod_{(u,v)\in E'}\left(A^{(u,v)}_{\phi(u),\phi(v)}-1\right).
\end{align}
Denote by $H(E')\in \mathcal{G}_{i,\text{edge}}$ the subgraph of $G$ spanned by edges in $E'$. Then  
the sum in (\ref{eq:sum-phi})  is 
\begin{align*}
&=\sum_{E'\subset E: |E|=i}\sum_{\phi:V(G)\to [K]} L^{-1}\prod_{u\in V(H(E'))}a^u_i
\prod_{(u,v)\in E'}\left(A^{(u,v)}_{\phi(u),\phi(v)}-1\right)
\prod_{u\notin V(H(E'))}a^u_{\phi(u)} \\
&=\sum_{E'\subset E: |E|=i}\left(\sum_{\phi:V(H(E'))\to [K]} L^{-1}\prod_{u\in V(H(E'))} a^u_i
\prod_{(u,v)\in E'}\left(A^{(u,v)}_{\phi(u),\phi(v)}-1\right)\right) \times \\
&\times \left(\sum_{\phi:V\setminus V(H(E'))\to [K]}
\prod_{u\in V\setminus V(H(E'))}a^u_{\phi(u)}\right) \\
&=\sum_{E'\subset E: |E|=i}\left(\sum_{\phi:V(H(E'))\to [K]} \prod_{u\in V(H(E'))} \bar a^u_i
\prod_{(u,v)\in E'}\bar A^{(u,v)}_{\phi(u),\phi(v)}\right),
\end{align*}
Here in the second equality the map $\phi:V(G)\to [K]$ is partition into its reduction to $V(H(E'))$ and its complement, and the product
form structure is used. The last equality follows from the definition of $L$ and $\bar a^u$. 
We recognize the last expression as 
\begin{align*}
\sum_{H\in \mathcal{G}_{i,\text{edge}}} Z_i(H)\text{Ind}(H,\bar G).
\end{align*}
\end{proof}

Using the representation (\ref{eq:roots-in-c}) for the polynomial $p(z)=\bar Z(G(z))$ and since the roots of $\bar Z(G(z))$ 
and $Z(G(z))$ are identical,
we obtain
\begin{align*}
&\text{Roots}(Z(G(z)),k)\\
&=\sum_{(m_1,\ldots,m_k)\in\Gamma_k}\alpha_{m_1,\ldots,m_k}\prod_{1\le i\le k}
\left(\sum_{H\in \mathcal{G}_{i,\text{edge}}} Z_i(H)
\text{Ind}(H,\bar G)\right)^{m_i}. 
\end{align*}
Next we expand the powers $(\cdot)^{m_i}$. Each graph $H\in \mathcal{G}_{i,\text{edge}}$ has at most $2i$ nodes.
Applying Lemma~\ref{lemma:Ind-product}, and using $\sum_\ell \ell m_\ell=k$ for each $(m_1,\ldots,m_k)\in\Gamma_k$
we obtain a representation for every $k$ of the form:
\begin{align}\label{eq:Roots-in-beta}
\text{Roots}(Z(G(z)),k) = \sum_{H\in \bar{\mathcal{G}}_{2k}} \beta_{H,k}
\text{Ind}(H,\bar G),
\end{align}
where $\beta_{H,k}$ depend on the decorated graph $H$ and $k$ only. Note that by (\ref{eq:au-normalized}) we must have  $\beta_{H,k}=0$ unless
$\mathcal{A}(H)$ satisfies 
\begin{align}\label{eq:au-in-H-normalized}
\sum_{i\in [K]}a^u_i(H)=1, \qquad u\in V(H).
\end{align}

\begin{lemma}\label{lemma:Zero-disconnected}
For every disconnected graph $H$ and every $k$, $\beta_{H,k}=0$.
\end{lemma}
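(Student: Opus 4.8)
The plan is to mirror the proof of Lemma~\ref{lemma:Zero-disconnected-uncolored}, but now working with the coefficients $\beta_{H,k}$ from the representation~(\ref{eq:Roots-in-beta}) for Type II interpolation, which express $\text{Roots}(Z(G(z)),k)$ as a linear combination of $\text{Ind}(H,\bar G)$ over decorated graphs $H\in\bar{\mathcal{G}}_{2k}$. As before, I would argue by minimal counterexample: assume for contradiction that there is a disconnected decorated graph $H_0$ with $\beta_{H_0,k}\ne 0$, and take $H_0$ to have the fewest nodes among all such disconnected graphs (for the fixed $k$). Crucially, because of~(\ref{eq:au-in-H-normalized}), $H_0$ must satisfy the normalization $\sum_{i}a^u_i(H_0)=1$ at every node, so when I substitute $G=\bar H_0$ (or rather a decorated graph whose $\bar{\cdot}$-operation yields $H_0$) into~(\ref{eq:Roots-in-beta}) I need to be careful that the substituted graph is legitimately a decorated graph in $\mathcal{G}$ with $\bar G = H_0$; one can realize this since the bar operation on the node vectors is a normalization and on the edge matrices is the shift $A\mapsto A-1$, both invertible, so such a preimage decorated graph exists. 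I would then apply~(\ref{eq:Roots-in-beta}) with $\bar G = H_0$.

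Next, I would decompose the right-hand side exactly as in the uncolored case. Terms with $|V(H)|>|V(H_0)|$ vanish because $\text{Ind}(H,H_0)=0$; terms with $|V(H)|=|V(H_0)|$ but $H\ne H_0$ vanish for the same reason (a color-respecting isomorphic embedding onto all of $H_0$ forces $H\cong H_0$ as decorated graphs); and by minimality of $|V(H_0)|$, every disconnected $H$ with strictly fewer nodes has $\beta_{H,k}=0$. So the expansion of $\text{Roots}(Z(\bar H_0(z)),k)$ reduces to a sum over \emph{connected} decorated graphs $H$ with $|V(H)|<|V(H_0)|$, plus the single term $\beta_{H_0,k}\text{Ind}(H_0,H_0)$. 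Now write $H_0$ as a disjoint union of two decorated graphs $H_{0,1}\sqcup H_{0,2}$ (possible since $H_0$ is disconnected). For each connected decorated $H$ we have the additivity~(\ref{eq:Sum-Ind-connected}), $\text{Ind}(H,H_0)=\text{Ind}(H,H_{0,1})+\text{Ind}(H,H_{0,2})$, so the connected part of the expansion splits as $\sum_{j=1,2}\sum_{H\ \mathrm{conn}}\beta_{H,k}\text{Ind}(H,H_{0,j})$. On the other hand, applying~(\ref{eq:Roots-in-beta}) directly to each $\bar H_{0,j}$ (noting $\text{Ind}(H,H_{0,j})=0$ unless $|V(H)|\le|V(H_{0,j})|<|V(H_0)|$, so only connected $H$ with $\beta_{H,k}\ne 0$ contribute by minimality) gives $\text{Roots}(Z(\bar H_{0,j}(z)),k)=\sum_{H\ \mathrm{conn}}\beta_{H,k}\text{Ind}(H,H_{0,j})$. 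Combining with the root-additivity~(\ref{eq:roots-additive}), $\text{Roots}(Z(\bar H_0(z)),k)=\sum_{j}\text{Roots}(Z(\bar H_{0,j}(z)),k)$, and comparing with the split expansion, everything cancels except $\beta_{H_0,k}\text{Ind}(H_0,H_0)=0$. Since $\text{Ind}(H_0,H_0)\ge 1$, we get $\beta_{H_0,k}=0$, the desired contradiction.

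The one place where the Type II argument genuinely differs from Type I, and hence the step I would treat most carefully, is the bookkeeping around decorated graphs: in Lemma~\ref{lemma:Zero-disconnected-uncolored} the graphs are uncolored, whereas here $H$, $H_0$, $H_{0,j}$ all carry decorations, and the representation~(\ref{eq:Roots-in-beta}) is in terms of $\text{Ind}(H,\bar G)$, not $\text{Ind}(H,G)$. So I need to confirm two things: first, that $\text{Ind}(\cdot,\cdot)$ respects disjoint unions and is zero on strictly-larger or non-isomorphic equal-size targets in the \emph{decorated} sense (this is immediate from the definition of color-respecting embedding and~(\ref{eq:Sum-Ind-connected})); and second, that I may legitimately substitute a decorated graph whose bar-image is exactly $H_0$ (resp.\ $H_{0,j}$), which holds because the map $\mathcal{A}\mapsto\bar{\mathcal{A}}$ is a bijection on decorations, and the bar of a disjoint union is the disjoint union of the bars, so decomposing $H_0=\bar G_0$ into disconnected parts corresponds to decomposing the preimage $G_0$ into disconnected parts with the same node/edge partition. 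Once this correspondence is in hand, the combinatorial cancellation is formally identical to the Type I proof, and the normalization constraint~(\ref{eq:au-in-H-normalized}) is only needed to ensure the preimages $\bar G_0,\bar G_{0,j}$ are consistent decorated graphs and plays no further role.
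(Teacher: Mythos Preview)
Your proposal is correct and follows essentially the same route as the paper's proof: take a minimal disconnected counterexample $H_0$, construct a decorated preimage $H_0'$ with $\bar H_0'=H_0$ (shifting edge matrices by $+1$ and leaving the already-normalized node vectors alone), apply the representation~(\ref{eq:Roots-in-beta}) to $H_0'$ and to its two disconnected pieces, and combine~(\ref{eq:Sum-Ind-connected}) with the root additivity~(\ref{eq:roots-additive}) to force $\beta_{H_0,k}\,\text{Ind}(H_0,H_0)=0$. The only quibble is notational: when you write $\text{Roots}(Z(\bar H_0(z)),k)$ you mean the preimage graph (the paper's $H_0'$), not the bar of $H_0$; and the node normalization is not literally invertible, but as you note, the constraint~(\ref{eq:au-in-H-normalized}) guarantees a preimage exists, which is all that is needed.
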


\begin{proof}
The proof is similar to the one of Lemma~\ref{lemma:Zero-disconnected-uncolored}, but with a minor adaptation required to handle the case
of decorated graph. A similar property for decorated graphs is also found in \cite{patel2017deterministic} for a different notion of 
color respecting isomorphisms.

Fix any $k$.  Assume for the purposes of contradiction that 
there exists a  disconnected $r$-node decorated graph $H_0=(V(H_0),E(H_0),\mathcal{A}(H_0))$  with $\beta_{H_0,k}\ne 0$.
Without the loss of generality we may assume that $r$ is the smallest value for which such a decorated graph exists.
Let us construct a coloring $\mathcal{A}_1$ of $H_0$ such that $\bar{\mathcal{A}}_1=\mathcal{A}(H_0)$, where the
transformation $\mathcal{A}_1\to \bar{\mathcal{A}}_1$ is obtained by (\ref{eq:a-reweighted}) and (\ref{eq:A-reweighted}). 
This is achieved by simply adding $1$ to every value
$A^{(u,v)}_{i,j}(H_0), (u,v)\in E(H_0), 1\le i,j\le K$, and leaving $a^u(H_0), u\in V(H_0)$ intact, due to (\ref{eq:au-in-H-normalized}).
The graph $(V(H_0),E(H_0)$ with this new coloring $\mathcal{A}_1(H_0)$ is denoted by $H_0'$. 
Applying the identity (\ref{eq:Roots-in-beta}) to $G=H_0'$  we have
\begin{align*}
\text{Roots}(Z(H_0'(z)),k) &= \sum_{H\in \bar{\mathcal{G}}_{2k}} 
\beta_{H,k}\text{Ind}(H,\bar H_0') \notag\\
&=\sum_{H\in \bar{\mathcal{G}}_{2k}} \beta_{H,k}\text{Ind}(H,H_0), 
\end{align*}
where the second equality is obtained since $\bar H_0'=H_0$. We expand the right-hand side as

\begin{align}
& \sum_{H_0\ne H\in \bar{\mathcal{G}}_{2k}} 
\beta_{H,k}
\text{Ind}(H,H_0) 
+
\beta_{H_0,k}\text{Ind}(H_0,H_0). \label{eq:roots-for-H_0}
\end{align}
We will prove that $\beta_{H_0,k}=0$, thus arriving at contradiction.
Trivially $\text{Ind}(H,H_0)=0$ if $|V(H)|>|V(H_0)|$.
Also $\text{Ind}(H,H_0)=0$ if $|V(H)|=|V(H_0)|$, but $H\ne H_0$ (up to isomorphism). Thus the right-hand
side above is
\begin{align*}
\sum_{H\in \bar{\mathcal{G}}_{2k}, |V(H)|<|V(H_0)|} \beta_{H,k}\text{Ind}(H,H_0)+
\beta_{H_0,k}\text{Ind}(H_0,H_0). 
\end{align*}
By the assumption of minimality of $r=|V(H_0)|$ we have $\beta_{H,k}=0$ for all disconnected graphs $H$ with $|V(H)|<|V(H_0)|$. Thus
\begin{align}\label{eq:roots-H}
&\text{Roots}(Z(H_0'(z),k) \notag \\
&=\sum_{H\in \bar{\mathcal{G}}_{2k,\text{conn}}, |V(H)|<|V(H_0)|} 
\beta_{H,k}\text{Ind}(H,H_0) 
+
\beta_{H_0,k}\text{Ind}(H_0,H_0). 
\end{align}

Let $H_{0,j}, j=1,2$  be any decomposition of $H_0$ into any two disconnected parts, with respective coloring reductions
$\mathcal{A}(H_{0,j}), j=1,2$. We denote by  
 $H_{0,j}', j=1,2$  the same decomposition but with respect to coloring $\mathcal{A}_1$.
For every connected graph $H$ we have by (\ref{eq:Sum-Ind-connected}). 
\begin{align*}
\text{Ind}(H,H_0)
=\sum_{j=1,2}\text{Ind}(H,H_{0,j}).
\end{align*}
Thus we may rewrite (\ref{eq:roots-H}) as
\begin{align}\label{eq:roots-H12}
&\text{Roots}(Z(H_0'(z),k) \notag \\
&=\sum_{j=1,2}\sum_{H\in \bar{\mathcal{G}}_{2k,\text{conn}}, |V(H)|\le |V(H_{0,j})|} 
\beta_{H,k}\text{Ind}(H,H_{0,j}) 
+
\beta_{H_0,k}\text{Ind}(H_0,H_0). 
\end{align}
Applying (\ref{eq:Roots-in-beta}) for $H_{0,j}', j=1,2$ we also have
\begin{align*}
\text{Roots}(Z(H_{0,j}'(z),k)  
=
\sum_{H\in \bar{\mathcal{G}}_{2k,\text{conn}},|V(H)|\le |V(H_{0,j)}|} \beta_{H,k}
\text{Ind}(H,H_{0,j}).
\end{align*}
By (\ref{eq:roots-additive}) we have
\begin{align*}
\text{Roots}(Z(H_{0}'(z),k)=
\sum_{j=1,2}\text{Roots}(Z(H_{0,j}'(z),k),
\end{align*}
and therefore
\begin{align*}
\text{Roots}(Z(H_{0}'(z),k)=
\sum_{j=1,2} \sum_{H\in \bar{\mathcal{G}}_{2k,\text{conn}},|V(H)|\le |V(H_{0,j)}|} \beta_{H,k}
\text{Ind}(H,H_{0,j}).
\end{align*}
Comparing  with (\ref{eq:roots-H12}) we conclude
\begin{align*}
\beta_{H_0,k}\text{Ind}(H_0,H_0)=0.
\end{align*}
Since $\text{Ind}(H_0,H_0)$  trivially has value at least $1$, 
we conclude that $\beta_{H_0,k}=0$,
thus arriving at contradiction. 
\end{proof}

Applying (\ref{eq:Roots-in-beta}) and Lemma~\ref{lemma:Zero-disconnected} we have
\begin{align*}
\text{Roots}(Z(G(z)),k) = \sum_{H\in \bar{\mathcal{G}}_{2k,\text{conn}}} \beta_{H,k}\text{Ind}(H,\bar G).
\end{align*}
The remainder of the proof is the same as for the case of Type I interpolation except that the value of $R$
has to change to $2k$ as opposed to $k$, since the sum is over connected graphs with at most $2k$ nodes.

\section*{Acknowledgement}
Several insightful conversations with Alexander Barvinok are gratefully acknowledged. The author
is  very grateful to Guus Regts for suggesting more up to date references and for a suggestion
on improving the bound appearing in the main result, Corollary~\ref{coro:Main-corollary}. 
Many thanks Yuzhou Gu and Yuri Polyanski for making the author aware of the Girard's formula.

\bibliographystyle{amsalpha}

\newcommand{\etalchar}[1]{$^{#1}$}
\providecommand{\bysame}{\leavevmode\hbox to3em{\hrulefill}\thinspace}
\providecommand{\MR}{\relax\ifhmode\unskip\space\fi MR }
\providecommand{\MRhref}[2]{%
  \href{http://www.ams.org/mathscinet-getitem?mr=#1}{#2}
}
\providecommand{\href}[2]{#2}


\end{document}